\newcommand{\keywords}[1]{\textbf{Keywords:}\quad #1}
\newcommand{\AMS}[1]{\textbf{AMS-classification numbers:}\quad #1}
\newtheorem{thm}{Theorem}
\newtheorem{prop}{Proposition}
\newtheorem{cor}{Corollary}
\newtheorem{lemma}{Lemma}
\newtheorem{definition}{Definition}
\title{A Fast Fourier Transform for the Johnson graph}
\author{Rodrigo Iglesias\\
\small Departamento de Matemática\\ [-0.8ex]
\small Universidad Nacional del Sur\\ [-0.8ex]
\small Bahía Blanca, Argentina.
 \and
Mauro Natale\\
\small Facultad de Ciencias Exactas\\ [-0.8ex]
\small Universidad Nacional del Centro\\ [-0.8ex]
\small Tandil, Argentina.} 
\begin{document}
\maketitle

\begin{abstract}
The set $X$ of $k$-subsets of an $n$-set has a natural graph structure where two $k$-subsets are connected if and only if the size of their intersection is $k-1$. This is known as the Johnson graph.
The symmetric group $S_n$ acts on the space of complex functions on $X$ and this space has a multiplicity-free decomposition as sum of irreducible representations of $S_n$, so it has a well-defined Gelfand-Tsetlin basis up to scalars. The Fourier transform on the Johnson graph is defined as the change of basis matrix from the delta function basis to the Gelfand-Tsetlin basis.

The direct application of this matrix to a generic vector requires $\binom{n}{k}^2$ arithmetic operations. 
We show that --in analogy with the classical Fast Fourier Transform on the discrete circle-- this matrix can be factorized as a product of $n-1$ orthogonal matrices, each one with at most two nonzero elements in each column.  The factorization is based on the construction of $n-1$ intermediate bases which are parametrized via the Robinson-Schensted insertion algorithm.
This factorization shows that the number of arithmetic operations required to apply this matrix to a generic vector is bounded above by $2(n-1) \binom{n}{k}$. 

We show that each one of these sparse matrices can be constructed using $O(\binom{n}{k})$ arithmetic operations. Our construction does not depend on numerical methods. Instead, they are obtained by solving small linear systems with integer coefficients derived from the Jucys-Murphy operators. Then both the construction and the succesive application of all these  $n-1$ matrices can be performed using $O(n  \binom{n}{k})$ operations.

As a consequence, we show that the problem of computing all the weights of the isotypic components of a given function can be solved in $O(n \binom{n}{k})$ operations, improving the previous bound $O(k^2 \binom{n}{k})$ when $k$ asymptotically dominates $\sqrt{n}$. The same improvement is achieved for the problem of computing the isotypic projection onto a single component.


\end{abstract}

 \keywords{nonabelian fast Fourier transform, Johnson graph, spectral analysis of ranked data, Gelfand-Tsetlin bases, Jucys-Murphy operators, Robinson-Schensted correspondence}
 
 \AMS{65T50, 43A30, 68W30, 68R05}

\section{Introduction}
The set of all subsets of cardinality $k$ of a set of cardinality $n$ is a basic combinatorial object with a natural metric space structure where two $k$-subsets are at distance $d$ if the size of their intersection is $k-d$. This structure is captured by the Johnson graph $J(n,k)$, whose nodes are the $k$-subsets and two $k$-subsets are connected if and only if they are at distance $1$. 

The Johnson graph is closely related to the Johnson scheme, an association scheme of major significance in classical coding theory (see \cite{Delsarte} for a survey on association scheme theory and its application to coding theory). 
The Johnson graph has played a fundamental role in the breakthrough quasipolynomial time algorithm for the  graph isomorphism problem presented in \cite{Babai} (see \cite{Toran} for background on the graph isomorphism problem).

  Functions on the Johnson graph arise in the analysis of ranked data. In many contexts, agents choose a $k$-subset from an $n$-set, and the data is collected as the function that assigns to the $k$-subset  $x$ the number of agents who choose $x$. This situation is considered, for example, in the statistical analysis of certain lotteries (see \cite{Diaconis2}, \cite{Diaconis-Rockmore}).
  
The vector space of functions on the Johnson graph is a representation of the symmetric group and it  decomposes as a multiplicity-free direct sum of irreducible representations (see \cite{Stanton}). 
Statistically relevant information about the function is contained in the isotypic projections of the function onto each irreducible component. This approach to the analysis of ranked data was called spectral analysis by Diaconis and developed in \cite{Diaconis}, \cite{Diaconis2}.
The problem of the efficient computation of the isotypic projections  has been studied by Diaconis and Rockmore in \cite{Diaconis-Rockmore}, and by Maslen, Orrison and Rockmore in \cite{Lanczos}. 
  
  The classical Discrete Fourier Transform (DFT) on the cyclic group  $\mathbb{Z}/2^n \mathbb{Z}$ can be seen as the application of a change of basis matrix from the  basis  $B_0$ of delta functions  to the basis $B_n$ of characters of the group $\mathbb{Z}/2^n \mathbb{Z}$. The direct application of this matrix to a generic vector involves $(2^n)^2$ arithmetic operations. The Fast Fourier Transform (FFT) is a fundamental algorithm that computes the DFT in $O(n 2^n )$ operations. This  algorithm was discovered by Cooley and Tukey \cite{Cooley-Tukey} and  the efficiency of their algorithm is due to a factorization of the change of basis matrix
  $$
  [B_0]_{B_n} = [B_{n-1}]_{B_n}\  ... \ [B_1]_{B_2} \ [B_0]_{B_1}
  $$
  where $B_1 ,..., B_{n-1}$ are intermediate orthonormal bases such that each matrix $[B_{i-1}]_{B_i}$ has  at most two nonzero entries in each column. We denote by $[B]_{B'}$ the change of basis matrix from the base $B$ to the base $B'$.
  
  In this paper, we show that the same phenomenon occurs in the case of the non-abelian Fourier transform on the Johnson graph.  This transform is defined as the application  of the change of basis matrix from the  basis  $B_0$ of delta functions  to the basis $B_n$ of Gelfand-Tsetlin functions. The Gelfand-Tsetlin basis --defined in Section \ref{GT basis}--  is well-behaved with respect to the action of the symmetric group $S_n$, in the  sense that each irreducible component is generated by a subset of the basis.

 The computational model used here counts a single complex multiplication
and addition as one operation.
We remark that we only count these algebraic operations and do not count those operations involved in the storage of the matrices. For example, we do not count the operations needed to reorder the rows and columns of a matrix.

 A direct computation of this Fourier transform involves $\binom{n}{k}^2$ arithmetic operations.
  We construct intermediate orthonormal bases $B_1 ,..., B_{n-1}$ such that each change of basis matrix $[B_{i-1}]_{B_i}$ has at most two nonzero entries in each column. 
  Each intermediate basis $B_i$ is parametrized by pairs composed by a standard Young tableau of height at most two and a word in the alphabet $\{ 1,2 \}$ as shown in Figure \ref{Labels of intermediate bases}.
  These intermediate bases enable the computation of the non-abelian Fourier transform --as well as its inverse-- in at most $2(n-1) \binom{n}{k}$ operations. 
  Our construction of the matrices $[B_{i-1}]_{B_i}$ is based on the Vershik-Okounkov approach \cite{Vershik} to the representation theory of the symmetric groups, which uses the Jucys-Murphy operators as a basic tool. 
  The present paper is an extension of \cite{Iglesias-Natale}. This previous work not included the efficient construction of the matrices $[B_{i-1}]_{B_i}$.  
  
  \begin{figure}[ht] 
\label{Labels of intermediate bases}
\begin{center}
\includegraphics[scale=0.6]{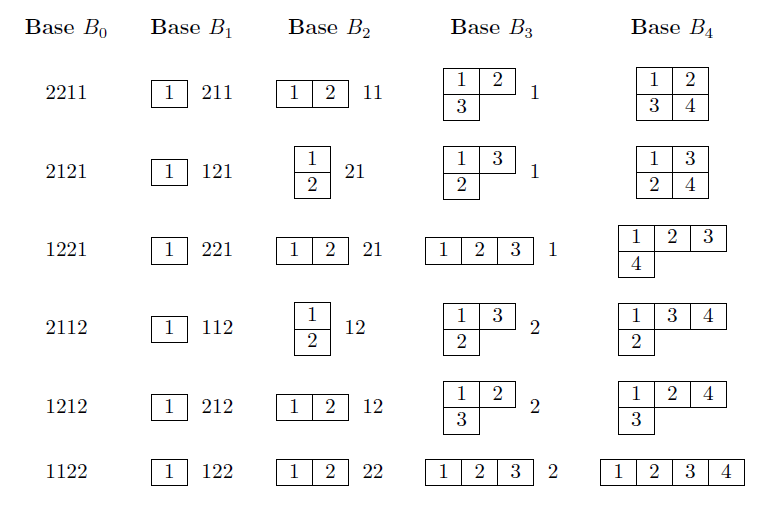} 
\caption{Labels of the intermediate bases in the case $n=4, k=2$. The $i$-th column parametrize the basis $B_i$. }
\label{Labels of intermediate bases}
\end{center}
\end{figure}

In \cite{Wolff}, Gelfand-Tsetlin bases were defined in the contextt of semisimple algebras and fast Fourier transforms were given for BMW, Brauer, and Temperley-Lieb algebras.  The function space on the Johnson graph is not a semisimple algebra, but it can be viewed as a module over the group ring $\mathbb{C}[S_n]$.  Our work is perhaps an indication that the methods in \cite{Wolff} extend to interesting modules over semisimple algebras.

  The upper bound we obtained for the algebraic complexity of the Fourier transform on the Johnson graph can be applied to the well-studied problem of computing the isotypic components of a function.
The most efficient algorithm for computing all the isotypic components --given by Maslen, Orrison and Rockmore in \cite{Lanczos}--  depends on Lanczos iteration method and uses $O(k^2 \binom{n}{k})$ operations. 
If the problem were to compute the isotypic projection onto a single component, it is no clear how to reduce this upper bound using the algorithm in \cite{Lanczos}. We show that -once the intermediate matrices $[B_{i-1}]_{B_i}$ have been computed for a fixed pair $(n,k)$-- this task can be accomplished in $O(n \binom{n}{k})$ operations, so our upper bound is an improvement when $k$ asymptotically dominates $\sqrt{n}$. We remark that our method does not depend on numerical computations. Instead, our construction of the matrices that perform the fast Fourier transform is based on the application of exact arithmetic operations given by the Jucys-Murphy operators.

We also show that the same $O(n \binom{n}{k})$ bound is achieved for the problem of computing all the weights of the isotypic components appearing in the decomposition of a function.  This problem could also be solved by computing every isotypic component and measuring their lengths, but this approach requires $O(k^2 \binom{n}{k})$ operations if we use the algorithm in \cite{Lanczos}.

In Section \ref{GT basis}, we review the definition  of Gelfand-Tsetlin bases for representations of the symmetric group.
In Section  \ref{Johnson graph}, we describe the well-known decomposition of the function space on the Johnson graph and define the corresponding Gelfand-Tsetlin basis.
In Section \ref{Decomposition theorems} we prove some decomposition theorems for the function space --Theorems \ref{One-dimensional decomposition} and \ref{Adapted decomposition}-- which are central for our subsequent results.
In Section \ref{Intermediate}, we introduce the sequence of intermediate bases of the function space, we prove that the change of basis matrix between two consecutive bases is a sparse matrix (Theorem \ref{Sparsity of matrix}) and we give a upper bound for number of operations used to apply the Fourier transform to a vector (Theorem \ref{Main theorem}). 
In Section \ref{RS} we point out the relation of our algorithm with the Robinson-Schensted insertion algorithm.
In Section \ref{Jucys-Murphy operators} we present some basic tools from the Vershik-Okounkov approach to the representation theory of the symmetric group, namely, the Jucys-Murphy operators and the formula for their eigenvalues.
In Section \ref{Efficient computation of the matrices} we give an efficient construction of the sparse matrices that realize the fast Fourier transform based on the properties of the Jucys-Murphy operators, and prove our main result (Theorem \ref{Main}).
In Section \ref{Applications}  we apply our algorithm to the problem of the computation of the isotypic components of a function on the Johnson graph, obtaining an improvement when $k$ asymptotically dominates $\sqrt{n}$.

\section{Gelfand-Tsetlin bases}
\label{GT basis}

Consider the chain of subgroups of $S_n$ 
$$
S_1 \subset S_2 \subset S_3 ...\subset S_n
$$
where $S_k$ is the subgroup of those permutations fixing the last $n-k$ elements of $\{1,...,n\}$.
Let $Irr(n)$ be the set of equivalence classes of irreducible complex representations of $S_n$. 
A fundamental fact in the representation theory of $S_n$ is that if $V_{\lambda}$ is an irreducible $S_n$-module corresponding to the representation $\lambda \in Irr(n)$ and we consider it by restriction as an $S_{n-1}$-module, then it decomposes as sum of irreducible representations of $S_{n-1}$ in a multiplicity-free way (see for example \cite{Vershik}). This means that if $V_{\mu}$ is an irreducible $S_{n-1}$-module corresponding to the representation $\mu \in Irr(n-1)$ then the dimension of the space 
$
\mbox{Hom}_{S_{n-1}} (V_{\mu},V_{\lambda})
$
is $0$ or $1$.
The \textit{branching graph}  is the following directed graph. The set of nodes is the disjoint union 
$$
\bigsqcup_{n \ge 1}Irr(n).
$$
Given representations $\lambda \in Irr(n)$ and $\mu \in Irr(n-1)$ there is an edge connecting them if and only if  
$\mu$ appears in the decomposition of $\lambda$, that is, if $\mbox{dim Hom}_{S_{n-1}} (V_{\mu},V_{\lambda})=1$.
If there is an edge between them we write
$$
\mu \nearrow \lambda,
$$
so we have a canonical decomposition of $V_{\lambda}$ into irreducible $S_{n-1}$-modules
$$
V_{\lambda} = \bigoplus_{\mu \nearrow \lambda} V_{\mu} .
$$
Applying this formula iteratively we obtain a uniquely determined decomposition into one-dimensional subspaces 
$$
V_{\lambda} = \bigoplus_{T} V_{T} ,
$$
where $T$ runs over all chains
$$
T = \lambda_1 \nearrow \lambda_2 \nearrow ... \nearrow \lambda_n,
$$
with $\lambda_i \in Irr(i)$ and $\lambda_n = \lambda$.
Choosing a unit vector $v_T$ --with respect to the $S_n$-invariant inner product  in $V_\lambda$--  of the one-dimensional space $V_T$ we obtain a basis $\{v_T\}$ of the irreducible module $V_{\lambda}$, which is called the \textit{Gelfand-Tsetlin basis}. 

Observe that if $V$ is a multiplicity-free representation of $S_n$ then there is a uniquely determined --up to scalars--  Gelfand-Tsetlin basis of $V$. In effect, if 
$$
V = \bigoplus_{\lambda \in S \subseteq Irr(n)} V_{\lambda}
$$
and  $B_{\lambda}$ is a GT-basis of $V_{\lambda}$ then a GT-basis of $V$ is given by the disjoint union 
$$
B = \bigsqcup_{\lambda \in S \subseteq Irr(n)} B_{\lambda}.
$$

The Young graph is the directed graph where the nodes are the Young diagrams and there is an arrow from  $\lambda$ to  $\mu$ if and only if $\lambda$ is contained in $\mu$ and their difference consists in only one box. It turns out that there is a bijection between the set of Young diagrams with $n$ boxes and $Irr(n)$ inducing a graph isomorphism between the Young graph and the branching graph (Theorem 5.8 of \cite{Vershik}). 
  
Then there is a bijection between the Gelfand-Tsetlin basis of $V_{\lambda}$ --where $\lambda$ is a Young diagram-- and the set of paths in the Young graph starting at  the one-box diagram and ending at the diagram $\lambda$. Each path can be represented by a unique standard Young tableau, so that the Gelfand-Tsetlin-basis of $V_{\lambda}$ is parametrized by the set of standard Young tableaux of shape $\lambda$ (see Figure \ref{Young graph}).
From now on we identify a chain 
$\lambda_1 \nearrow \lambda_2 \nearrow ... \nearrow \lambda_n$
with its corresponding standard Young tableau.

\begin{figure}[ht] 
\begin{center}
\includegraphics[scale=0.4]{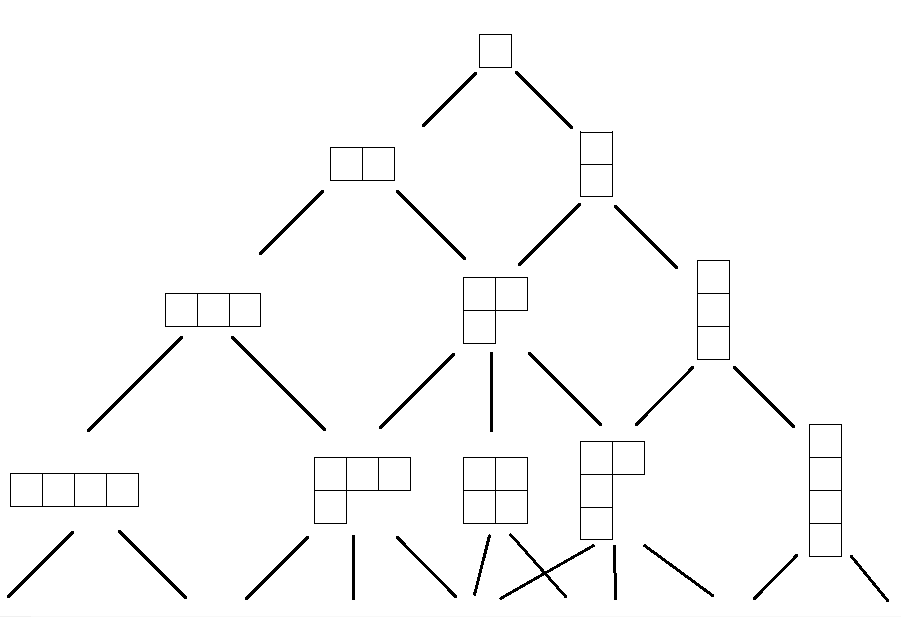} 
\caption{The Young graph. Each path from the top node to a particular Young diagram $\lambda$ is identified with a standard Young tableau of shape $\lambda$.
}
\label{Young graph}
\end{center}
\end{figure}

\section{Decomposition of the function space on the Johnson graph}
\label{Johnson graph}

 We define a  \textit{$k$-set} as a subset of $\{1,...,n\}$ of cardinality $k$. Let   $X$ be the set of all $k$-sets.  Given two $k$-sets $x,y$ the distance  $d(x,y)$  is defined as $n- \lvert x \cap y \rvert$. The group $S_n$ acts naturally on  $X$ by
$$
\sigma \{i_1,...,i_k\} = \{\sigma(i_1),...,\sigma(i_k)\}
$$
The vector space $\mathcal{F}$ of the complex valued functions on $X$ is a complex representation of  $S_n$ where the action is given by $\sigma f = f \circ \sigma^{-1}$.

 To each $k$-set $x \in X$ we attach the delta function $\delta(x)$ defined on $X$ by 
$$
\delta(x) (z) = 
\left\{ 
\begin{array}{cc}
1& \mbox{if} \ \  x=z \\
0 & \mbox{otherwise}
\end{array}
\right. .
$$

We consider $\mathcal{F}$ as an inner product space where the inner product is such that the delta functions form an orthonormal basis.

A Young diagram can be identified with the sequence given by the numbers of boxes in the rows, written top down. For example the Young diagram
\ytableausetup{smalltableaux}
\begin{center}
\ydiagram{5,4,2}
\end{center}
is identified with $(5,4,2)$.
It can be shown (see \cite{Ceccherini}, \cite{Stanton}) that the decomposition of $\mathcal{F}$ as a direct sum of irreducible representations of $S_n$ is given as follows.

\begin{thm}
\label{decomposition of F}
Let $s=min(k,n-k)$. The space  $\mathcal{F}$ of functions on the Johnson graph J(n,k) decomposes in  $s+1$ multiplicity-free irreducible representations of the group $S_n$. Moreover, the decomposition is given by
$$
\mathcal{F} = \bigoplus_{i=0}^{s} V_{\alpha_i}
$$ 
where $\alpha_i$ is the Young diagram $(n-i,i)$.
\end{thm}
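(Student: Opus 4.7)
The plan is to identify $\mathcal{F}$ with a Young permutation module and then apply Young's rule. Since $S_n$ acts transitively on $X$ with the stabilizer of the reference point $\{1,\ldots,k\}$ equal to $S_k \times S_{n-k}$, sending $\delta(x)$ to a coset indicator gives an $S_n$-isomorphism
$$
\mathcal{F} \cong \mathrm{Ind}_{S_k \times S_{n-k}}^{S_n}(\mathbf{1}) \cong M^{(n-s,\, s)},
$$
where $M^{\nu}$ denotes the Young permutation module of a partition $\nu$ and $s = \min(k,n-k)$. (The reordering inside the subgroup is harmless since $S_k \times S_{n-k}$ and $S_{n-k} \times S_k$ are conjugate in $S_n$, so both induced modules are isomorphic.)

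Next I would invoke Young's rule: for any partition $\nu \vdash n$,
$$
M^{\nu} = \bigoplus_{\lambda \,\trianglerighteq\, \nu} K_{\lambda,\nu}\, S^{\lambda},
$$
where $S^{\lambda}$ is the Specht module and $K_{\lambda,\nu}$ is the Kostka number counting semistandard Young tableaux of shape $\lambda$ and content $\nu$. With $\nu = (n-s, s)$, any dominating partition satisfies $\lambda_1 + \lambda_2 \geq n$ and thus has at most two parts, so $\lambda$ ranges over $(n-i, i)$ for $0 \le i \le s$.

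The third step is the Kostka computation: $K_{(n-i,i),(n-s,s)} = 1$ for each such $i$. Indeed, in any SSYT of shape $(n-i,i)$ with $n-s$ ones and $s$ twos, strict column-increase forces every entry of the second row to be a $2$, after which the first row is uniquely determined by its content (ones before twos). Assembling,
$$
\mathcal{F} = \bigoplus_{i=0}^{s} S^{(n-i,i)} = \bigoplus_{i=0}^{s} V_{\alpha_i},
$$
which is multiplicity-free with exactly $s+1$ irreducible summands. There is no serious obstacle here: the argument is a textbook consequence of Young's rule specialized to two-row shapes. The only points requiring care are the stabilizer identification in step one and the uniqueness of the two-row SSYT in step three, both of which are direct. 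As a dimension sanity check, $\dim V_{\alpha_i} = \binom{n}{i} - \binom{n}{i-1}$ (with $\binom{n}{-1} := 0$), and these telescope to $\sum_{i=0}^{s} \dim V_{\alpha_i} = \binom{n}{s} = \binom{n}{k} = |X| = \dim \mathcal{F}$.
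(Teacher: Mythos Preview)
Your argument is correct and complete. The identification of $\mathcal{F}$ with the Young permutation module $M^{(n-s,s)}$, the application of Young's rule, and the Kostka computation are all standard and carried out accurately; the dimension check at the end is a nice confirmation.

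As for comparison with the paper: the paper does not actually prove this theorem. It states it as a known result, citing \cite{Ceccherini} and \cite{Stanton}, and uses it as a black box throughout. So your proposal supplies a genuine proof where the paper offers only a reference. The route you take---reducing to a permutation module and invoking Young's rule---is exactly the textbook approach one finds in those references, so in spirit you are filling in what the paper defers to the literature rather than taking a different path.
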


For example, if $n=6$ and $k=2$ then  the irreducible components of $\mathcal{F}$ are in correspondence with the Young diagrams
\ytableausetup{smalltableaux}
\begin{center}
 \ \ 
\ydiagram{6,0} \  \ \ \ 
\ydiagram{5,1} \  \ \ \ 
\ydiagram{4,2} 
\end{center}

From now on we denote by $s$ the number $min(k,n-k)$.

\subsection{Gelfand-Tsetlin basis of $\mathcal{F}$}
\label{Gelfand-Tsetlin basis of F}

From Theorem \ref{decomposition of F} we see that $\mathcal{F}$ has a well-defined --up to scalars-- Gelfand-Tsetlin basis and that there is a bijection between the set of elements of this GT-basis and the set of standard tableaux of shape $(n-a,a)$ where $a$ runs from $0$ to $s$.

Let us give a more explicit description of the GT-basis of $\mathcal{F}$. 
Consider the space $ \mathcal{F}$ as an $S_i$-module for $i=1,...,n$, and 
 let 
 $\mathcal{F}_{i,\lambda} $ be the isotypic component corresponding to the irreducible representation $\lambda$ of $S_i$ so that for each $i$ we have a decomposition
 $$
 \mathcal{F}= \bigoplus_{\lambda \in Irr(S_i)} \mathcal{F}_{i,\lambda}
 $$
 where $\mathcal{F}_{i,\lambda} \perp \mathcal{F}_{i,\lambda'}$ if $\lambda \neq \lambda'$.
 
 If $\lambda_1, \lambda_2, ...., \lambda_n$ is a sequence of Young diagrams where $\lambda_i$ corresponds to a representation of $S_i$ we define
$$
 \mathcal{F}_{\lambda_1  \lambda_2  ...  \lambda_n} 
 =
  \mathcal{F}_{1,\lambda_1} \cap \mathcal{F}_{2,\lambda_2} \cap ... \cap
 \mathcal{F}_{n,\lambda_n}
$$
 From the branching rule for representations of $S_n$ and from Theorem \ref{decomposition of F} it turns out that $\mathcal{F}$ has an orthogonal decomposition in one-dimensional subspaces
$$
 \mathcal{F} =  \bigoplus_{\lambda_1 \nearrow \lambda_2 \nearrow ... \nearrow \lambda_n}
 \mathcal{F}_{\lambda_1  \lambda_2  ...  \lambda_n}
 $$
where $\lambda_n$ runs through all representations of $S_n$ corresponding to Young diagrams $(n-a,a)$ for $a=0,...,s$ (see Figure \ref{GT of the Johnson graph}).

\begin{figure}[ht] 
\begin{center}
\includegraphics[scale=0.45]{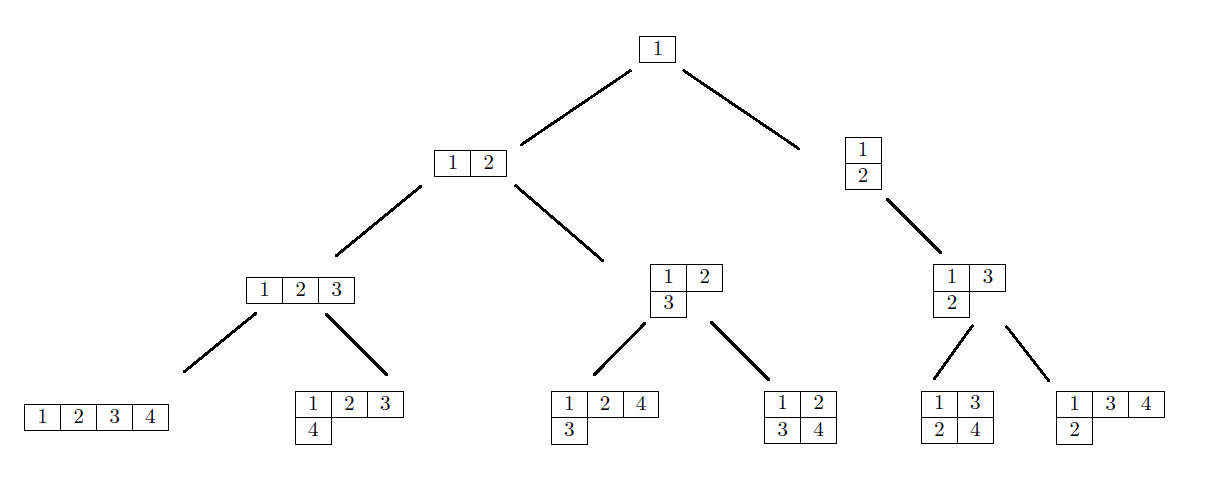} 
\caption{The leaves of this tree parametrize the Gelfand-Tsetlin basis of the space $\mathcal{F}$ of functions on the Johnson graph $J(4,2)$.
}
\label{GT of the Johnson graph}
\end{center}
\end{figure}

\section{Adapted decompositions of $\mathcal{F}$}
\label{Decomposition theorems}

We represent a $k$-set by a word in the alphabet $\{1,2\}$ as follows. The element $i \in \{1,...,n\}$ belongs to the $k$-subset if and only if the place $i$ of the  word is occupied by the letter $1$. For example, 
$$
\{2,3,6,8\} \subseteq \{1,...,9\} \ \ \ \ \ \rightarrow \ \ \ \ \ \  2\ 1\ 1\ 2\ 2\ 1\ 2\ 1\ 2
$$
So, from now on, we identify $X$ with the set of words of length $n$ in the alphabet $\{1,2\}$ such that the letter $1$ appears $k$ times.  The group $S_n$ acts on $X$ in the natural way.

For $i= 1,...,n$ and $c \in \{1,2\} $ we define $\mathcal{F}^{i,c}$ as the subspace of $\mathcal{F}$ generated by the delta functions $\delta(x)$ such that the word $x$ has the letter $c$ in the place $i$.
For each $i$ we have a decomposition 
 $$
 \mathcal{F}=  \mathcal{F}^{i,1} \oplus \mathcal{F}^{i,2}
 $$
 with $ \mathcal{F}^{i,1} \perp \mathcal{F}^{i,2}$.

 \begin{definition}
 For $q=1,...,n$ let 
 $c_{q} c_{q+1} ... c_n$ be a word whose letters are in the alphabet $\{1,2\}$. For $q=n+1$ the word $c_{q} c_{q+1} ... c_n$ denotes the word with no letters.
 For $q=1,...,n$ we define $\mathcal{F}^{c_{q} c_{q+1} ... c_n}$ as the subspace of $\mathcal{F}$
 $$
  \mathcal{F}^{c_{q} c_{q+1} ... c_n} = \mathcal{F}^{q,c_{q}}  \cap 
 \mathcal{F}^{q+1,c_{q+1}} \cap ... \cap \mathcal{F}^{n,c_{n}}
$$
In the case $q=n+1$ we set  $\mathcal{F}^{c_{q} c_{q+1} ... c_n} = \mathcal{F}$
\end{definition}
We see that $ \mathcal{F}^{c_{q}  ... c_n} $ is nontrivial if and only if the number of letters $1$ in 
 the word $c_{q} ... c_n$ is at most $k$. 
 
 \begin{definition}
 For $q=1,...,n$ we define $X^{c_q...c_n}$ as the subset of those words $w_1...w_n$ in $X$ such that $w_q...w_n$ is $c_q...c_n$. For $q=n+1$ we set $X^{c_q...c_n}=X$
 \end{definition}

 Observe that each subset $X^{c_q...c_n}$ is stabilized by the action of the subgroup $S_{q-1}$. 
 We have  
 $$
 \mathcal{F}^{c_q...c_n} = \bigoplus_{w_1...w_n \in X^{c_q...c_n} } \mathbb{C} \ \ \delta(w_1...w_n)
 $$ 
 
 \begin{definition}
 For $q=1,...,n$ we define $X_q$ as the set of words $c_q...c_n$ where the number of letters $1$ is at most $k$. For $q=n+1$ we set $X_q$ to be the set whose only element is the word with no letters.
 \end{definition}

 Then $\mathcal{F}$ decomposes as
\begin{equation} \label{F}
 \mathcal{F} = \bigoplus_{c_q...c_n \in X_q} \mathcal{F}^{c_q...c_n} 
\end{equation}
 and each subspace $ \mathcal{F}^{c_q...c_n}$ is invariant by the action of $S_{q-1}$.
 
\begin{definition} 
For $1 \leq p  < q \leq n+1$ we define
$$
 \mathcal{F}_{\lambda_1...\lambda_p}^{c_{q} ... c_n} =  \mathcal{F}_{\lambda_1...\lambda_p} \cap   \mathcal{F}^{c_{q} ... c_n} 
$$
\end{definition}

\begin{thm} \label{One-dimensional decomposition}
For fixed $1 \leq q \leq n+1$, the space $\mathcal{F}$ decomposes in one-dimensional subspaces as 
$$
\mathcal{F} =  \bigoplus  \mathcal{F}_{\lambda_1...\lambda_{q-1}}^{c_q ... c_n} 
$$
where the direct sum runs through all $\lambda_1...\lambda_{q-1}$  and $c_q ... c_n$ such that  if the number of letters $1$ in the word $c_q ... c_n$ is $k-r$, where $0 \leq r \leq k$, then the sequence $\lambda_1...\lambda_{q-1}$ forms a standard Young tableaux $\lambda_1 \nearrow \lambda_2 \nearrow ... \nearrow \lambda_{q-1}$ where $\lambda_{q-1}$ is a Young diagram of the form $(q-1-a',a')$ with $a'=0,...,min(r,q-1-r)$.
\begin{proof}
Suppose that the letter $1$ appears $k-r$ times in the word $c_q...c_n$, where $0 \leq r \leq k$. Then the subset $X^{c_q...c_n}$ consists of those words  $w_1...w_n$ such that  $w_1...w_{q-1}$  has exactly $r$ appearances of the letter $1$. This means that $X^{c_q...c_n}$ has the structure of the Johnson graph $J(q-1,r)$ and, when acted by the subgroup $S_{q-1}$, the space of $\mathbb{C}$-valued functions on $X^{c_q...c_n}$ decomposes as an $S_{q-1}$-module in a multiplicity-free way according to the  formula of Theorem \ref{decomposition of F}.
 As a consequence, each  subspace $ \mathcal{F}^{c_q...c_n}$  has a Gelfand-Tsetlin decomposition
\begin{equation} \label{1}
  \mathcal{F}^{c_q...c_n}  =  \bigoplus_{\lambda_1 \nearrow \lambda_2 \nearrow ... \nearrow \lambda_{q-1}}
 \mathcal{F}^{c_q...c_n}_{\lambda_1  ...  \lambda_{q-1}},
\end{equation}
 where $\lambda_{q-1}$ runs over all Young diagrams $(q-1-a' , a')$ with $0 \leq a' \leq min(r,q-1-r)$.
 Then the theorem follows from (\ref{F}).
\end{proof}
\end{thm}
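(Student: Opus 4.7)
The strategy is to refine decomposition (\ref{F}) summand by summand using the Gelfand-Tsetlin construction inside each piece. Fix $q$, and for each word $c_q\ldots c_n \in X_q$ I would first note that $\mathcal{F}^{c_q\ldots c_n}$ is stable under the action of $S_{q-1}$: the subgroup $S_{q-1}$ permutes only the first $q-1$ coordinates of a word and fixes the suffix, so it preserves $X^{c_q\ldots c_n}$ and hence the span of the corresponding delta functions.

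The key step is to identify $\mathcal{F}^{c_q\ldots c_n}$ with the function space on a smaller Johnson graph as an $S_{q-1}$-module. If $c_q\ldots c_n$ contains exactly $k-r$ copies of the letter $1$, then a word in $X^{c_q\ldots c_n}$ is determined by a prefix $w_1\ldots w_{q-1}$ with exactly $r$ ones, and this correspondence is $S_{q-1}$-equivariant. Therefore $X^{c_q\ldots c_n}$ is isomorphic, as an $S_{q-1}$-set, to the vertex set of $J(q-1,r)$, and Theorem \ref{decomposition of F} applied to $S_{q-1}$ yields a multiplicity-free decomposition of $\mathcal{F}^{c_q\ldots c_n}$ whose irreducible components are labeled by the Young diagrams $(q-1-a',a')$ with $0 \le a' \le \min(r,q-1-r)$.

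Once multiplicity-freeness is in hand, iterating the branching rule along the chain $S_1 \subset S_2 \subset \cdots \subset S_{q-1}$ inside $\mathcal{F}^{c_q\ldots c_n}$ gives a canonical orthogonal decomposition into one-dimensional subspaces indexed by standard Young tableaux $\lambda_1 \nearrow \cdots \nearrow \lambda_{q-1}$ of shape $(q-1-a',a')$, exactly as in Section~\ref{GT basis}. Summing over $c_q\ldots c_n \in X_q$ via (\ref{F}) then assembles the claimed decomposition of $\mathcal{F}$.

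The one technical point to verify -- and the main obstacle -- is that the intersection $\mathcal{F}_{\lambda_1\ldots\lambda_{q-1}} \cap \mathcal{F}^{c_q\ldots c_n}$, where each $\mathcal{F}_{i,\lambda_i}$ is computed as an isotypic component of the full space $\mathcal{F}$, actually coincides with the Gelfand-Tsetlin line built inside the subspace $\mathcal{F}^{c_q\ldots c_n}$. This compatibility is what makes the joint parametrization by $(\lambda_1\ldots\lambda_{q-1},\, c_q\ldots c_n)$ meaningful. It reduces to the observation that, for $i \le q-1$, the orthogonal projection onto the $\lambda_i$-isotypic component of $S_i$ commutes with the orthogonal projection onto $\mathcal{F}^{c_q\ldots c_n}$; this follows from the $S_{q-1}$-stability (hence $S_i$-stability) of $\mathcal{F}^{c_q\ldots c_n}$ together with the orthogonality of distinct isotypic components of an $S_i$-representation.
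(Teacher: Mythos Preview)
Your argument follows the same route as the paper: identify $\mathcal{F}^{c_q\ldots c_n}$ with the function space on $J(q-1,r)$ as an $S_{q-1}$-module, invoke Theorem~\ref{decomposition of F} to get multiplicity-freeness, apply the Gelfand--Tsetlin decomposition inside each piece, and then sum over $c_q\ldots c_n$ using~(\ref{F}). The only difference is that you make explicit a compatibility check the paper leaves implicit: that the intersection $\mathcal{F}_{\lambda_1\ldots\lambda_{q-1}}\cap\mathcal{F}^{c_q\ldots c_n}$, with the isotypic components taken in the ambient space $\mathcal{F}$, agrees with the Gelfand--Tsetlin line constructed internally in $\mathcal{F}^{c_q\ldots c_n}$. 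Your justification via the $S_i$-stability of $\mathcal{F}^{c_q\ldots c_n}$ for $i\le q-1$ (so that the isotypic projector for $S_i$ on $\mathcal{F}$ restricts to the isotypic projector on the subspace) is correct and is a worthwhile clarification.
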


\begin{thm} \label{Adapted decomposition}
For fixed $1 \leq p  < q \leq n+1$, the space $\mathcal{F}$ has a orthogonal decomposition 
$$
\mathcal{F} =  \bigoplus  \mathcal{F}_{\lambda_1...\lambda_p}^{c_q ... c_n} 
$$
where the direct sum runs through all $\lambda_1...\lambda_p$  and $c_q ... c_n$ such that  if the number of letters $1$ in the word $c_q ... c_n$ is $k-r$, where $0 \leq r \leq k$, then the sequence $\lambda_1...\lambda_p$ forms a standard Young tableaux $\lambda_1 \nearrow \lambda_2 \nearrow ... \nearrow \lambda_p$ where $\lambda_p$ is a Young diagram of the form (p-a,a) with $a=0,...,min(r,q-1-r, p/2)$.
\end{thm}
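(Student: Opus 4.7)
The plan is to deduce this result from Theorem~\ref{One-dimensional decomposition} by regrouping the one-dimensional pieces that share a common prefix $\lambda_1 \nearrow \dots \nearrow \lambda_p$ of the chain and the same word $c_q \dots c_n$.

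First, iterating the multiplicity-free branching along the subgroup chain $S_p \subset S_{p+1} \subset \dots \subset S_{q-1}$ refines $\mathcal{F}_{\lambda_1 \dots \lambda_p}$ as the orthogonal sum $\bigoplus_{\lambda_{p+1} \nearrow \dots \nearrow \lambda_{q-1}} \mathcal{F}_{\lambda_1 \dots \lambda_{q-1}}$. Since intersection with the fixed delta-spanned subspace $\mathcal{F}^{c_q \dots c_n}$ distributes over orthogonal direct sums, this yields
$$\mathcal{F}_{\lambda_1 \dots \lambda_p}^{c_q \dots c_n} \;=\; \bigoplus_{\lambda_{p+1} \nearrow \dots \nearrow \lambda_{q-1}} \mathcal{F}_{\lambda_1 \dots \lambda_{q-1}}^{c_q \dots c_n},$$
where the sum runs over those continuations that appear in Theorem~\ref{One-dimensional decomposition}. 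Summing over admissible prefixes and words then reorganizes the one-dimensional decomposition into the coarser decomposition claimed.

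Next, I would identify precisely which indices $\bigl((\lambda_1, \dots, \lambda_p),\, c_q \dots c_n\bigr)$ admit at least one valid continuation and hence contribute a nonzero summand. Writing $\lambda_p = (p-a, a)$, which already forces $a \leq p/2$, and letting $k-r$ denote the number of letters~$1$ in the word, Theorem~\ref{One-dimensional decomposition} requires the terminal shape to be $(q-1-a', a')$ with $a' \leq \min(r, q-1-r)$. Since each branching step can only grow the rows, every continuation satisfies $a' \geq a$, so the existence of any continuation forces $a \leq \min(r, q-1-r)$. Conversely, if $a \leq \min(r, q-1-r)$, an explicit witness is obtained by appending all remaining $q-1-p$ boxes to the first row, producing the chain of shapes $(p-a+j, a)$ for $j=1, \dots, q-1-p$; each is a valid two-row Young diagram because only the first row grows, and the final shape $(q-1-a, a)$ is admissible. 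Combining the three bounds yields exactly the stated condition $a \leq \min(r, q-1-r, p/2)$.

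Finally, for the orthogonality of the summands, I would note that two distinct indices of the coarser decomposition partition the one-dimensional pieces of Theorem~\ref{One-dimensional decomposition} into disjoint families (the indices differ either in some $\lambda_i$ with $i \leq p$, or in the word $c_q \dots c_n$), so orthogonality is inherited from the finer decomposition. The only genuinely combinatorial step is the existence argument for a continuation with a prescribed two-row terminal shape; once the explicit \emph{fill the first row} extension is exhibited, everything else is a routine regrouping of an already orthogonal direct sum.
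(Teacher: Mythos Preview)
Your argument is correct and follows the paper's approach: both proofs regroup the one-dimensional pieces of Theorem~\ref{One-dimensional decomposition} according to the prefix $\lambda_1 \nearrow \dots \nearrow \lambda_p$ and then identify which prefixes actually occur, and you in fact supply the combinatorial characterization (including the explicit ``fill the first row'' continuation) that the paper merely asserts. One small remark: intersection with $\mathcal{F}^{c_q \dots c_n}$ does not distribute over \emph{arbitrary} orthogonal direct sums; what makes it work here is that $\mathcal{F}^{c_q \dots c_n}$ is $S_{q-1}$-invariant, so the commuting isotypic projections for $S_1,\dots,S_{q-1}$ all restrict to it.
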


\begin{proof}
  For fixed $c_q...c_n$, we group the one dimensional subspaces $\mathcal{F}^{c_q...c_n}_{\lambda_1  ...  \lambda_{q-1}}$ in Equation (\ref{1}) according to the $p$ initial Young diagrams defining the standard Young tableaux $\lambda_1  ...  \lambda_{q-1}$ and obtain 
 $$
 \mathcal{F}^{c_q...c_n}_{\lambda_1  ...  \lambda_{p}} = \bigoplus_{\lambda_{p+1} \nearrow \lambda_{p+2} \nearrow ... \nearrow \lambda_{q-1}} 
 \mathcal{F}^{c_q...c_n}_{\lambda_1  ...  \lambda_{q-1}},
 $$
 where the direct sum runs through all Young diagrams $\lambda_{p+1} \lambda_{p+2}  ...  \lambda_{q-1}$ such that $\lambda_1 \nearrow \lambda_2 \nearrow ... \nearrow \lambda_{q-1}$ is a standard Young tableau where $ \lambda_{q-1}$ is a Young diagram of the form $(q-1-a' , a')$ with $0 \leq a' \leq min(r,q-1-r)$.
 
Then  we obtain the decomposition
\begin{equation} \label{Fc}
  \mathcal{F}^{c_q...c_n}  =  \bigoplus_{\lambda_1 \nearrow \lambda_2 \nearrow ... \nearrow \lambda_{p}}
 \mathcal{F}^{c_q...c_n}_{\lambda_1  ...  \lambda_{p}},
\end{equation}
where $\lambda_1  ...  \lambda_{p}$ run through all sequences of Young diagrams for which there is a sequence $\lambda_{p+1}  ...  \lambda_{q-1}$ such that $\lambda_1 \nearrow \lambda_2 \nearrow ... \nearrow \lambda_{q-1}$ is a standard Young tableau of the form $(q-1-a' , a')$ with $0 \leq a' \leq min(r,q-1-r)$. Such sequences $\lambda_1  ...  \lambda_{p}$ are characterized as those sequences such that $\lambda_1 \nearrow \lambda_2 \nearrow ... \nearrow \lambda_{p}$ is a standard Young tableau of the form $(p-a,a)$ with $0 \leq a \leq min(r,q-1-r, p/2)$. From (\ref{F})  and (\ref{Fc}) we obtain the theorem.
 
 \end{proof}

\section{The intermediate bases}
\label{Intermediate}

Let us describe schematically the Fast Fourier Transform algorithm for the Johnson graph. The input is a vector $f$ in the space $\mathcal{F}$ of functions on the set $X$ of $k$-sets, written in the delta function basis $B_0$, given as a column vector $[f]_{B_0}$. The output of the algorithm is a column vector representing the  vector $f$ written in the basis $B_n$, the Gelfand-Tsetlin basis of $\mathcal{F}$. In other words, the objective is to apply  the change of basis matrix to a given column vector:
$$
[f]_{B_n} = [B_0]_{B_n} \  [f]_{B_0}
$$
Our technique to realize this matrix multiplication is to construct a sequence of intermediate orthonormal bases $B_1, B_2, ..., B_{n-1}$ such that 
$$
[B_0]_{B_n} =  [B_{n-1}]_{B_n}  \  \ ...\ \ [B_1]_{B_2} \ \  [B_0]_{B_1}\
$$
is a decomposition where each factor is a sparse matrix.

\subsection{Block-diagonal matrices}
In order to establish the sparsity of a matrix we will rely on the following simple principle given by Lemma \ref{Block-diagonal}.

Whenever $T$ is a linear operator on a vector space $V$ and $B$ is a base of $V$ we denote by $[T]_{B}$ the matrix whose columns are the elements of the base $B$ transformed by $T$ and written in the base $B$.

 \begin{definition}
 Let $V$ be a finite-dimensional vector space. For $i=1,...,n$ let $V_i$ be a subspace of $V$ such that
 $$
 V = \bigoplus_{i=1}^{n} V_i
 $$
 Let $B$ be a basis of $V$ and $T$ a linear endomorphism of $V$. We say that the basis $B$ is adapted to the decomposition if every element of $B$ is in $V_i$ for some $i$.  We say that $T$ is adapted to the decomposition if it preserves each subspace $V_i$.
\end{definition}

We will use the following simple fact from linear algebra.
\begin{lemma} \label{Block-diagonal}
Let $V$ be a finite-dimensional vector space with a direct sum decomposition $V = \bigoplus_{i=1}^{n} V_i$. Let $B$ and $B'$ be two bases of $V$, both adapted to this decomposition and let $T$ be a linear endomorphism of $V$ adapted to this decomposition. Then

a) there are orders of the elements of $B$ and $B'$ such that the change of basis matrix $[B]_{B'}$ is block-diagonal, with each block of size $dim(V_i)$ for each $i=1,...,n$.

b) there is an order of the basis $B$  such that the $[T]_{B}$ is block-diagonal, with a block of size $dim(V_i)$ for each $i=1,...,n$.
\end{lemma}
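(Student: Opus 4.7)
The plan is to exploit the uniqueness of the direct sum decomposition together with the hypothesis that $B$ and $B'$ are adapted. First I would establish the preliminary observation that an adapted basis splits as a disjoint union $B = \bigsqcup_{i=1}^n B_i$ with $B_i \subseteq V_i$, and moreover each $B_i$ is a basis of $V_i$. The containment is the definition of adaptedness; the fact that each $B_i$ spans $V_i$ (equivalently, has size $\dim V_i$) follows by a dimension count: the $B_i$ are disjoint, each $B_i$ is linearly independent inside $V_i$, so $\sum |B_i| = |B| = \dim V = \sum \dim V_i$, forcing $|B_i| = \dim V_i$. The same decomposition $B' = \bigsqcup_{i=1}^n B'_i$ applies to $B'$.

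For part (a), I would order both $B$ and $B'$ by listing the elements of $B_1$ first, then $B_2$, and so on, and similarly for $B'$. Consider any $b \in B_i$; since $b \in V_i$ and $V = \bigoplus V_j$ is a direct sum, the expansion of $b$ in the basis $B'$ can involve only basis vectors lying in $V_i$, that is, only elements of $B'_i$. (If it had a nonzero coefficient on some $b'_j \in B'_j$ with $j \neq i$, subtracting would produce a nontrivial element of $V_i$ equal to an element of $\bigoplus_{j \neq i} V_j$, contradicting the directness of the sum.) Thus the column of $[B]_{B'}$ corresponding to $b$ is supported in the row-block indexed by $B'_i$, which is exactly the statement that the matrix is block-diagonal with a block of size $\dim V_i$ for each $i$.

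For part (b), I would order $B$ in the same grouped manner and observe that for $b \in B_i$ we have $T(b) \in V_i$ since $T$ preserves $V_i$; by the same direct-sum argument, the expansion of $T(b)$ in $B$ involves only elements of $B_i$. Hence $[T]_B$ is block-diagonal with blocks of size $\dim V_i$.

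There is no real obstacle here: the entire content is the uniqueness of expansion in a direct sum, which forces the off-block entries to vanish. The only thing to be mildly careful about is the dimension-counting step showing that an adapted basis really does restrict to bases of the summands, rather than, for instance, leaving some $V_i$ underrepresented; this is immediate from the fact that $B$ is a basis of the whole $V$.
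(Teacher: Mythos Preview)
Your proof is correct and complete. The paper itself does not prove this lemma at all---it is stated as a ``simple fact from linear algebra'' and used without justification---so your argument is more detailed than what the paper provides, and the approach you take (splitting an adapted basis into bases of the summands by a dimension count, then using uniqueness of expansion in a direct sum) is the standard one.
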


 \subsection{Definition of the basis $B_i$}

Let $0 \leq i \leq n$. As observed in the proof of Theorem \ref{Adapted decomposition}, the set  $X^{c_{i+1}...c_n}$ has the structure of the Johnson graph $J(i,r)$ where $k-r$ is the number of letters $1$ in the word $c_{i+1}...c_n$, and when acted by the subgroup $S_i$, the space $\mathcal{F}^{c_{i+1}...c_n}$ decomposes as an $S_i$-module in a multiplicity-free way according to the  formula of Theorem \ref{decomposition of F}. As a consequence, each subspace $\mathcal{F}^{c_{i+1}...c_n}$ has a Gelfand-Tsetlin decomposition into one-dimensional subspaces
\begin{equation}  \label{Pre-definition of the basis}
  \mathcal{F}^{c_{i+1}...c_n}  =  \bigoplus_{\lambda_1 \nearrow \lambda_2 \nearrow ... \nearrow \lambda_{i}}
 \mathcal{F}^{c_{i+1}...c_n}_{\lambda_1  ...  \lambda_{i}},
\end{equation}
 where $\lambda_{i}$ runs over all Young diagrams $(i-a , a)$ with $0 \leq a \leq min(r,i-r)$. 
 Let $B^{c_{i+1}...c_{n}}$ be the unique, up to scalars, basis of $\mathcal{F}^{c_{i+1}...c_n}$ adapted to the decomposition  (\ref{Pre-definition of the basis}).
 Then the space $\mathcal{F}$ decomposes in one-dimensional subspaces as
 \begin{equation} \label{Definition of the basis}
  \mathcal{F} = \bigoplus_{c_{i+1}...c_n \in X_i} \ \ \bigoplus_{\lambda_1 \nearrow \lambda_2 \nearrow ... \nearrow \lambda_{i}}
 \mathcal{F}^{c_{i+1}...c_n}_{\lambda_1  ...  \lambda_{i}},
\end{equation}
 \begin{definition}
 We define the $i$-th intermediate basis of $\mathcal{F}$ as the unique, up to scalars, basis $B_i$ of $\mathcal{F}$ adapted to the decomposition (\ref{Definition of the basis}).
  \end{definition}
  We have 
 $$
 B_i = \bigsqcup_{c_{i+1}...c_n \in X_i}  B^{c_{i+1}...c_n}
 $$

From  Theorem \ref{decomposition of F}, we see that the basis $B^{c_{i+1}...c_n}$ is parametrized by the set of  standard tableaux of shape $(i-a , a)$ with $0 \leq a \leq min(r,i-r)$. On the other hand, the word $c_{i+1}...c_n$ runs over the set $X_i$. Figure \ref{Intermediate bases} illustrates the structure of the intermediate bases.

\begin{figure}[ht] 
\begin{center}
\includegraphics[scale=0.6]{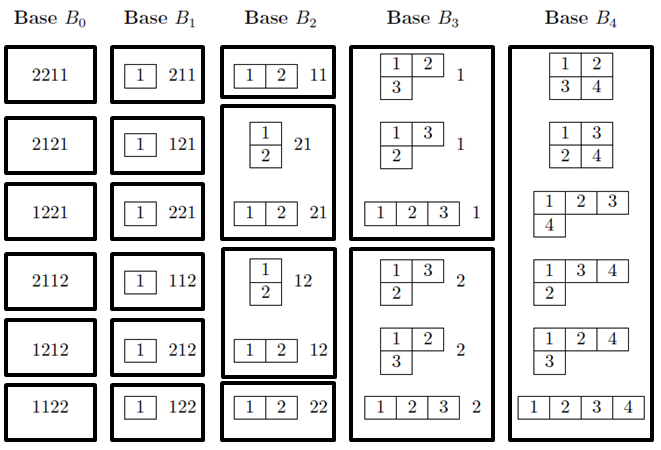} 
\caption{Labels of the intermediate bases in the case $n=4, k=2$.  The boxes in each column represent the decomposition $B_i = \sqcup B^{c_{i+1}...c_n}$.
}
\label{Intermediate bases}
\end{center}
\end{figure}

 \begin{lemma} \label{Adapted bases}
 For $1 \leq p \leq i < q \leq n+1$, the basis $B_i$ is adapted to the decomposition of Theorem \ref{Adapted decomposition}
 $$
\mathcal{F} =  \bigoplus  \mathcal{F}_{\lambda_1...\lambda_p}^{c_q ... c_n} 
$$
\end{lemma}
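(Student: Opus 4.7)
The plan is a direct unpacking of definitions. By construction of $B_i$, each $v \in B_i$ lies in a unique one-dimensional summand $\mathcal{F}^{c_{i+1}\ldots c_n}_{\lambda_1 \ldots \lambda_i}$ of the decomposition (\ref{Definition of the basis}). Given the parameters $p \leq i < q$ of the lemma, I would associate to $v$ the truncated prefix $\lambda_1, \ldots, \lambda_p$ of the chain and the suffix $c_q \ldots c_n$ of the word (empty if $q = n+1$), and then argue that $v$ lies in the summand $\mathcal{F}^{c_q \ldots c_n}_{\lambda_1 \ldots \lambda_p}$ appearing in the decomposition of Theorem \ref{Adapted decomposition}. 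Since this assignment sends every basis element to some summand, $B_i$ is adapted.

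The required containment $\mathcal{F}^{c_{i+1}\ldots c_n}_{\lambda_1 \ldots \lambda_i} \subseteq \mathcal{F}^{c_q \ldots c_n}_{\lambda_1 \ldots \lambda_p}$ reduces to monotonicity of intersections: the left-hand side equals $\bigcap_{j=i+1}^n \mathcal{F}^{j,c_j} \cap \bigcap_{j=1}^i \mathcal{F}_{j,\lambda_j}$, whereas the right-hand side is the same intersection taken over $j=q, \ldots, n$ and $j=1, \ldots, p$. Since $p \leq i$ and $q \geq i+1$, the right-hand intersection uses a subfamily of the factors of the left-hand side, and the inclusion is immediate.

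The only nontrivial step, and the one I expect to be the main obstacle, is to check that $\mathcal{F}^{c_q \ldots c_n}_{\lambda_1 \ldots \lambda_p}$ actually appears as one of the summands indexed in Theorem \ref{Adapted decomposition}, i.e.\ that $\lambda_p = (p-a, a)$ satisfies $a \leq \min(r, q-1-r, p/2)$ with $k-r$ counting $1$'s in $c_q \ldots c_n$. Writing $\lambda_i = (i-a_i, a_i)$ and $r_i$ for the number of $1$'s in $c_1 \ldots c_i$, the decomposition of the function space on the Johnson graph $J(i, r_i)$ gives $a_i \leq \min(r_i, i - r_i)$, and removing boxes from $\lambda_i$ yields $a \leq a_i$; the bound $a \leq p/2$ is automatic from $\lambda_p$ being a Young diagram. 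Since $c_1 \ldots c_{q-1}$ extends $c_1 \ldots c_i$ by positions $i+1, \ldots, q-1$, one obtains $r \geq r_i$ and $q-1-r \geq i - r_i$, so $a \leq a_i \leq r_i \leq r$ and $a \leq a_i \leq i-r_i \leq q-1-r$, which closes the bookkeeping and completes the argument.
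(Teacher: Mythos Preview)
Your proof is correct and follows essentially the same line as the paper's: both arguments reduce to the containment $\mathcal{F}^{c_{i+1}\ldots c_n}_{\lambda_1 \ldots \lambda_i} \subseteq \mathcal{F}^{c_q \ldots c_n}_{\lambda_1 \ldots \lambda_p}$, which is immediate from the definition of these subspaces as intersections once one drops the factors indexed by $j\in\{p+1,\ldots,i\}$ and $j\in\{i+1,\ldots,q-1\}$. Your third paragraph, verifying the index constraint $a \leq \min(r, q-1-r, p/2)$, is extra care that the paper omits; it can also be bypassed by observing that since $v\neq 0$ the containing subspace is nonzero and therefore must already appear among the summands listed in Theorem~\ref{Adapted decomposition}.
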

 \begin{proof}
 It is clear from the definition of the subspace  $\mathcal{F}_{\lambda_1...\lambda_p}^{c_q ... c_n} $ that if 
 $1 \leq p \leq p' < q' \leq q \leq n+1$ then 
 $$
  \mathcal{F}_{\lambda_1...\lambda_{p'}}^{c_{q'} ... c_n}   \subseteq  
    \mathcal{F}_{\lambda_1...\lambda_p}^{c_q ... c_n} 
 $$
 Since every element of the basis $B_i$ is in one of the subspaces of the form 
 $\mathcal{F}_{\lambda_1...\lambda_i}^{c_{i+1} ... c_n}$
 the Lemma follows from the case $p'=i$, $q'=i+1$.
 
 \end{proof}

 \subsection{Sparsity of the change of basis matrix $[B_i]_{B_{i-1}}$}
 
In this section we establish the fact that, for all $i$, each column of the matrix $[B_i]_{B_{i-1}}$ has at most two nonzero entries. In fact, we show that if the bases are properly ordered, the matrix $[B_i]_{B_{i-1}}$ is block-diagonal with blocks of size at most two.

\begin{thm}
\label{Sparsity of matrix}
There is an order of the basis $B_i$ and an order of the basis $B_{i-1}$ such that the change of basis matrix $[B_i]_{B_{i-1}}$ is block-diagonal with all blocks  of  size  at most two.

 \begin{proof}
 By Lemma \ref{Adapted bases}, both $B_i$ and $B_{i-1}$ are adapted to the decomposition
 $$
 \mathcal{F} = \bigoplus \mathcal{F}^{c_{i+1}...c_n}_{\lambda_1  ...  \lambda_{i-1}}
 $$
 that is, the decomposition of Theorem \ref{Adapted decomposition} with $p=i-1$ and $q=i+1$.
 Observe that the subspace $\mathcal{F}^{c_{i+1}...c_n}_{\lambda_1  ...  \lambda_{i-1}}$
 has dimension at most two, since it is spanned by the  subspaces 
 $\mathcal{F}^{1 c_{i+1}...c_n}_{\lambda_1  ...  \lambda_{i-1}}$ and $\mathcal{F}^{2 c_{i+1}...c_n}_{\lambda_1  ...  \lambda_{i-1}}$ and --according to Theorem \ref{One-dimensional decomposition}-- these subspaces have dimension at most one. Then the theorem follows from Lemma \ref{Block-diagonal}.
 \end{proof}
\end{thm}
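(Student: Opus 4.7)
My plan is to exhibit a single direct-sum decomposition of $\mathcal{F}$ to which both $B_{i-1}$ and $B_i$ are simultaneously adapted, and then verify that each summand of this decomposition has dimension at most two. Granted these two facts, Lemma \ref{Block-diagonal}(a) immediately produces orderings of the two bases that render $[B_i]_{B_{i-1}}$ block-diagonal with blocks matching the summand dimensions, so all blocks are of size at most two.

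The natural common decomposition is the one given by Theorem \ref{Adapted decomposition} with the parameters $p = i-1$ and $q = i+1$, namely
$$
\mathcal{F} = \bigoplus \mathcal{F}^{c_{i+1}\ldots c_n}_{\lambda_1 \ldots \lambda_{i-1}}.
$$
To see that both intermediate bases refine this decomposition, I would apply Lemma \ref{Adapted bases}: for $B_{i-1}$ the required inequality $p \leq i-1 < q$ reads $i-1 \leq i-1 < i+1$, and for $B_i$ it reads $i-1 \leq i < i+1$. Both hold, so both bases are adapted.

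The real content is the dimension estimate on each summand. I would split according to the letter in position $i$ and write
$$
\mathcal{F}^{c_{i+1}\ldots c_n}_{\lambda_1 \ldots \lambda_{i-1}}
= \mathcal{F}^{1\, c_{i+1}\ldots c_n}_{\lambda_1 \ldots \lambda_{i-1}}
\oplus \mathcal{F}^{2\, c_{i+1}\ldots c_n}_{\lambda_1 \ldots \lambda_{i-1}}.
$$
By Theorem \ref{One-dimensional decomposition} applied with $q$ replaced by $i$, each summand on the right-hand side is either zero or one-dimensional, according to whether the resulting pair (word, tableau) satisfies the combinatorial compatibility condition of that theorem. Hence the summand on the left has dimension at most two, and plugging this into Lemma \ref{Block-diagonal}(a) completes the argument.

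I do not foresee a serious obstacle: Theorems \ref{One-dimensional decomposition} and \ref{Adapted decomposition} together with Lemma \ref{Adapted bases} have already done the heavy structural work. The only subtlety is choosing the correct pair $(p,q)$ so that the decomposition simultaneously encodes the $S_{i-1}$-isotypic refinement (through $p=i-1$) and the last $n-i$ letters of the word (through $q=i+1$); once this is done, the two-way split at position $i$ drops out mechanically from Theorem \ref{One-dimensional decomposition}.
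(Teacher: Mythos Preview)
Your proposal is correct and follows essentially the same approach as the paper: both use the decomposition of Theorem \ref{Adapted decomposition} with $p=i-1$, $q=i+1$, invoke Lemma \ref{Adapted bases} to see that $B_{i-1}$ and $B_i$ are adapted to it, split each summand at position $i$ into two pieces that are at most one-dimensional by Theorem \ref{One-dimensional decomposition}, and conclude via Lemma \ref{Block-diagonal}(a). The only cosmetic difference is that you spell out the inequality check for Lemma \ref{Adapted bases} and write the two-way split as a direct sum, whereas the paper just says ``spanned by''; neither affects the argument.
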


\begin{thm}
\label{Main theorem}
Let $B_0$ be the delta function basis of $\mathcal{F}$ and let $B_n$ be a Gelfand-Tsetlin basis of $\mathcal{F}$. We assume that the matrices $[B_{i-1}]_{B_{i}}$ for $i=2,3,...,n$ have been computed. Then, given a column vector $[f]_{B_0}$ with $f \in \mathcal{F}$, the column vector $[f]_{B_n}$ given by 
$$
[f]_{B_n} = [B_0]_{B_n} \  [f]_{B_0}
$$
can be computed using at most $2(n-1) \binom{n}{k}$ operations.
\end{thm}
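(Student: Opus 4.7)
The plan is to apply the factorization
\[
[B_0]_{B_n} \;=\; [B_{n-1}]_{B_n}\,[B_{n-2}]_{B_{n-1}}\,\cdots\,[B_1]_{B_2}\,[B_0]_{B_1}
\]
iteratively, transforming $[f]_{B_0}$ into $[f]_{B_1}$, then $[f]_{B_2}$, and so on, and to bound the cost of each individual matrix-vector multiplication by $2\binom{n}{k}$ using the block-diagonal structure established in Theorem \ref{Sparsity of matrix}. The final count then follows from the number of non-trivial factors in the product.

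First, I would observe that the initial change of basis $[B_0]_{B_1}$ is trivial. Indeed, within each subspace $\mathcal{F}^{c_2\dots c_n}$ the group $S_1$ is trivial, so the Gelfand-Tsetlin constraint imposes no condition on $B_1$, and we may take $B_1 = B_0$ (the delta function basis restricted to each block). Therefore only the $n-1$ matrices $[B_{i-1}]_{B_i}$ with $i=2,\dots,n$ contribute to the cost, and by hypothesis these are already available.

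The main step is the per-matrix operation count. By Theorem \ref{Sparsity of matrix}, for each $i \in \{2,\dots,n\}$ there exist orderings of $B_{i-1}$ and $B_i$ for which $[B_{i-1}]_{B_i}$ is block-diagonal with blocks of size at most two. Under these orderings the matrix has at most $2\binom{n}{k}$ nonzero entries in total, since the sum of the block sizes equals $\dim \mathcal{F} = \binom{n}{k}$ and each block of size $d\le 2$ contributes at most $d^2 \le 2d$ entries. In the computational model of the paper, a matrix-vector multiplication costs one operation per nonzero entry of the matrix (a complex multiplication together with an addition into the accumulating coordinate). Hence applying $[B_{i-1}]_{B_i}$ to the current column vector costs at most $2\binom{n}{k}$ operations.

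Summing over the $n-1$ non-trivial factors yields the bound $2(n-1)\binom{n}{k}$, as required. There is no real obstacle here: the only subtlety is to handle the reordering of coordinates implicit in Theorem \ref{Sparsity of matrix} without paying any arithmetic cost, which is consistent with the convention stated in the introduction that operations involved in storage and reordering of rows and columns are not counted.
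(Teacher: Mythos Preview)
Your proof is correct and follows essentially the same approach as the paper: factor $[B_0]_{B_n}$ through the intermediate bases, note that $[B_0]_{B_1}$ is the identity, and use Theorem~\ref{Sparsity of matrix} to bound each of the remaining $n-1$ matrix-vector products by $2\binom{n}{k}$ operations. The only cosmetic difference is that the paper counts the nonzero entries column by column (at most two per column) rather than block by block, but this yields the same bound.
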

\begin{proof}
By Theorem  \ref{Sparsity of matrix} we see that each column of the matrix 
$[B_{i-1}]_{B_{i}}$ has at most two non-zero elements, no matter the order of each basis. Then the application of  the matrix $[B_{i-1}]_{B_{i}}$ to a generic column vector can be done using at most $2 \binom{n}{k}$ operations. 
Observe that $[B_0]_{B_1}$ is the identity matrix. We have 
\begin{eqnarray*}
[B_0]_{B_n} &= [B_{n-1}]_{B_{n}}\ \ ...\ \  [B_{1}]_{B_{2}} \ \ [B_{0}]_{B_1}  =[B_{n-1}]_{B_{n}}\ \ ...\ \  [B_{1}]_{B_{2}}  \end{eqnarray*}
Then the successive applications of the $n-1$ matrices can be done in at most $2(n-1) \binom{n}{k}$ operations.
\end{proof}

\subsection{Example}

Consider the case $n=4$, $k=2$. For each vector $b$ of the basis $B_{i}$, there exists a unique word  ${c_{i+1}...c_n} \in X_i$ and a unique standard tableau $\lambda_1 \nearrow \lambda_2 \nearrow ... \nearrow \lambda_i$ such that $b \in \mathcal{F}^{c_{i+1}...c_n}_{\lambda_1  ...  \lambda_{i}}$. Then $b$ is a linear combination of those elements of $B_{i-1}$ that belong to the spaces $\mathcal{F}^{1c_{i+1}...c_n}_{\lambda_1  ...  \lambda_{i-1}}$ and $\mathcal{F}^{2c_{i+1}...c_n}_{\lambda_1  ...  \lambda_{i-1}}$. Then the matrices $[B_i]_{B_{i-1}}$  have the form

\[
\left[  B_{1}\right]  _{B_{2}}=\left[
\begin{array}
[c]{cccccc}%
\ast & 0 & 0 & 0 & 0 & 0\\
0 & \ast & \ast & 0 & 0 & 0\\
0 & \ast & \ast & 0 & 0 & 0\\
0 & 0 & 0 & \ast & \ast & 0\\
0 & 0 & 0 & \ast & \ast & 0\\
0 & 0 & 0 & 0 & 0 & \ast
\end{array}
\right]  \text{ \ \ \ }\left[  B_{2}\right]  _{B_{3}}=\left[
\begin{array}
[c]{cccccc}%
\ast & 0 & \ast & 0 & 0 & 0\\
0 & \ast & 0 & 0 & 0 & 0\\
\ast & 0 & \ast & 0 & 0 & 0\\
0 & 0 & 0 & \ast & 0 & 0\\
0 & 0 & 0 & 0 & \ast & \ast\\
0 & 0 & 0 & 0 & \ast & \ast
\end{array}
\right]
\]
\\
\[
\left[  B_{3}\right]  _{B_{4}}=\left[
\begin{array}
[c]{cccccc}%
\ast & 0 & 0 & 0 & \ast & 0\\
0 & \ast & 0 & \ast & 0 & 0\\
0 & 0 & \ast & 0 & 0 & \ast\\
0 & \ast & 0 & \ast & 0 & 0\\
\ast & 0 & 0 & 0 & \ast & 0\\
0 & 0 & \ast & 0 & 0 & \ast
\end{array}
\right].
\]

\begin{figure}[ht] 
\begin{center}
\includegraphics[scale=0.5]{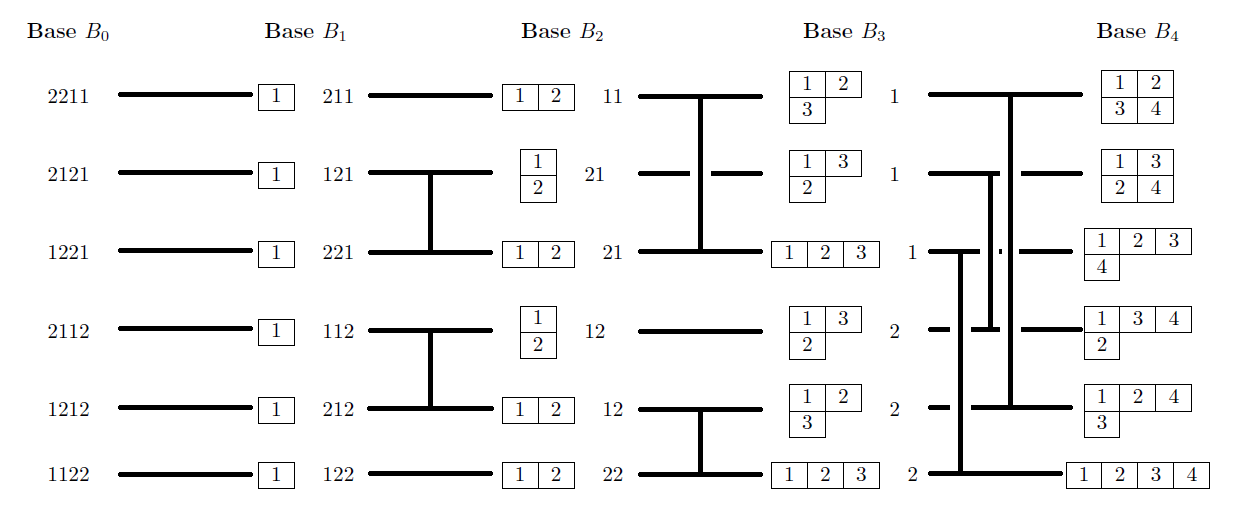} 
\caption{An illustration of the sparsity of the matrices $[B_{i-1}]_{B_{i}}$. A label of an element $b \in B_{i-1}$ is  connected with a label of an element $b' \in B_{i}$ if and only if they are S-related.  Two labels are horizontally adyacent if and only if they are RS-related, that is, each row corresponds to the process given by the Robinson-Schensted insertion algorithm.}
\label{Sparsity}
\end{center}
\end{figure}

\section{Connection with the Robinson-Schensted insertion algorithm}
\label{RS}

In Figure \ref{Sparsity} the vertical order of the labels of the elements of each basis $B_i$ has been carefully chosen in order to simplify the figure. In fact, the order is such that each horizontal line corresponds to a well known process: the Robinson-Schensted (RS) insertion algorithm (see  \cite{Fulton}). 

Observe that each horizontal line gives the sequence --reading from left to right-- that is obtained by applying the RS insertion algorithm to a word corresponding to an element of the basis $B_0$, which is a word in the alphabet $\{1,2\}$. The elements of this sequence are triples $(P,Q,\omega)$ where $P$ is a semistandard tableau, $Q$ is a standard tableau and $\omega$ is a word in the alphabet $\{1,2\}$. In our situation $P$ is filled with letters in $\{1,2\}$ so its height is at most $2$. It turns out that the triple $(P,Q,\omega)$ is determined by the pair $(Q,\omega)$ so $P$ can be ommited.

\begin{definition}
For $1 \leq i \leq n$, let $b \in B_{i-1}$ and $b' \in B_{i}$. We say that $b$ and $b'$ are \textit{S-related} if both belong to the subspace 
$
 \mathcal{F}^{c_{i+1}...c_n}_{\lambda_1  ...  \lambda_{i-1}}
$
for some standard Young tableau $\lambda_1 \nearrow \lambda_2 \nearrow ... \nearrow \lambda_{i-1}$ and some word $c_{i+1},..., c_n$.
\end{definition}

\begin{definition}
Let $b \in B_{i-1}$ and $b' \in B_{i}$. We say that $b$ and $b'$ are \textit{RS-related} if the label of $b'$ is obtained by applying the RS insertion step to the label of $b$. 
\end{definition}

From the definitions it is immediate the following (see Figure \ref{Sparsity} for an illustration).
\begin{thm}
If $b \in B_{i-1}$ and $b' \in B_{i}$ are RS-related then they are S-related.
\end{thm}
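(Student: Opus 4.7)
The plan is to unpack both relations as purely combinatorial conditions on the labels of $b$ and $b'$, and then observe that one condition matches the other verbatim. First I would record the label-to-subspace dictionary built up in Section \ref{Intermediate}: an element $b \in B_{i-1}$ is characterized by a pair $(T',v)$, where $T' = \mu_1 \nearrow \cdots \nearrow \mu_{i-1}$ is a standard Young tableau on $i-1$ boxes and $v = c_i c_{i+1} \cdots c_n \in X_{i-1}$, and $b$ spans the line $\mathcal{F}^{c_i c_{i+1} \cdots c_n}_{\mu_1 \cdots \mu_{i-1}}$. In the same way, $b' \in B_i$ is characterized by a pair $(T,w)$ with $T = \lambda_1 \nearrow \cdots \nearrow \lambda_i$ and $w = c'_{i+1} \cdots c'_n \in X_i$, and $b'$ spans $\mathcal{F}^{c'_{i+1} \cdots c'_n}_{\lambda_1 \cdots \lambda_i}$.

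Next I would reduce S-relatedness to a label-level statement. The obvious containments
\[
\mathcal{F}^{c_i c_{i+1} \cdots c_n}_{\mu_1 \cdots \mu_{i-1}} \subseteq \mathcal{F}^{c_{i+1} \cdots c_n}_{\mu_1 \cdots \mu_{i-1}}, \qquad \mathcal{F}^{c'_{i+1} \cdots c'_n}_{\lambda_1 \cdots \lambda_i} \subseteq \mathcal{F}^{c'_{i+1} \cdots c'_n}_{\lambda_1 \cdots \lambda_{i-1}},
\]
combined with the fact that the subspaces $\mathcal{F}^{c_{i+1} \cdots c_n}_{\nu_1 \cdots \nu_{i-1}}$ are in direct sum (Theorem \ref{Adapted decomposition} with $p=i-1$, $q=i+1$), force each of $b$ and $b'$ to lie in exactly one such summand, uniquely determined by the corresponding slice of its label. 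Hence $b$ and $b'$ are S-related if and only if $c'_j = c_j$ for $i+1 \leq j \leq n$ and $\lambda_j = \mu_j$ for $1 \leq j \leq i-1$.

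Finally I would appeal to the definition of a single RS insertion step. Starting from the label $(T', c_i c_{i+1} \cdots c_n)$ of $b$, together with the semistandard tableau $P'$ that it determines (recall that on the alphabet $\{1,2\}$ the triple $(P,Q,\omega)$ is recoverable from $(Q,\omega)$, as noted just before the definitions), a single step row-inserts the letter $c_i$ into $P'$, records the newly created box in position $i$ of a standard tableau extending $T'$ to $T$, and strips the first letter of the word to leave $c_{i+1} \cdots c_n$. Thus the label $(T, c_{i+1} \cdots c_n)$ of the RS-successor $b'$ satisfies precisely the two label-level conditions identified above, so $b$ and $b'$ are S-related.

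The argument is essentially bookkeeping on labels, so I do not anticipate a genuine obstacle. The most delicate step is the very first one: verifying against the construction in the preceding subsection that the parametrization of $B_i$ by pairs (standard tableau, tail word) really corresponds to the subspace $\mathcal{F}^{c_{i+1} \cdots c_n}_{\lambda_1 \cdots \lambda_i}$; once this dictionary is in hand, the rest of the proof is a direct comparison of combinatorial descriptions.
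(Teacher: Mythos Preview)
Your proposal is correct and is exactly what the paper has in mind: the paper states that the theorem is ``immediate from the definitions'' and gives no further proof, and what you have written is precisely the explicit unpacking of those definitions into label-level conditions. Your careful verification that S-relatedness amounts to matching the tail word $c_{i+1}\cdots c_n$ and the initial chain $\lambda_1\nearrow\cdots\nearrow\lambda_{i-1}$, and that a single RS insertion step preserves exactly these data, is the intended content of ``immediate.''
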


\section{Jucys-Murphy operators}
\label{Jucys-Murphy operators}

Let $\mathbb{C}(S_n)$ denote the group algebra of the group $S_n$. Let $(ij)$ denote the transposition that interchanges $i$ with $j$. For $i=1,2,...,n$, the \textit{Jucys-Murphy element} $J_i$ is defined as the element of $\mathbb{C}(S_n)$ given by 
$$
J_i = (1i) + (2i) + ... + ((i-1)i)
$$
(in particular $J_1=0$).
For $i=1,2,...,n$ let $Z_i \in \mathbb{C}(S_n)$ be 
$$
Z_i = \text{sum of all transpositions in}  \ S_i
$$
Observe that for $i \in \{2,...,n\}$
$$
J_i = Z_i - Z_{i-1}.
$$
If $V$ is a representation of $S_n$ we consider the canonical $\mathbb{C}(S_n)$-module structure on $V$ and we identify an element $A$ of $\mathbb{C}(S_n)$ with the linear operator  $A: V \rightarrow V$ sending $v$ to $Av$. 

\begin{prop} 
\label{Basis elements are eigenvectors of JM operators}
Let $V$ be a multiplicity-free representation of $S_n$. Then every element of a Gelfand-Tsetlin basis of $V$ is an eigenvector of $J_i$ for all $i \in \{1,...,n\}$.
\end{prop}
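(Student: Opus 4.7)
The plan is to exploit the fact that $Z_i = \sum_{1 \le a < b \le i}(ab)$ is the sum of a full conjugacy class of $S_i$, hence lies in the center $Z(\mathbb{C}(S_i))$. By Schur's lemma, applied to each irreducible $S_i$-subrepresentation occurring in $V$, the element $Z_i$ must act as a scalar on every isotypic component of $V$ under the restriction to $S_i$. Writing this scalar as $c_{i,\lambda}$ for the isotypic component of type $\lambda \vdash i$, I get that $Z_i$ is diagonalizable on $V$ and its eigenspaces are precisely the $S_i$-isotypic components of $V$. The same remark applies to $Z_{i-1}$ with respect to the subgroup $S_{i-1}$.

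The second step is to locate each GT basis vector $v_T$ inside a specific $S_i$-isotypic component. Using the hypothesis that $V$ is multiplicity-free as an $S_n$-module, I decompose $V = \bigoplus_{\lambda \in S} V_\lambda$ into its irreducible components, and the GT basis of $V$ is the disjoint union of GT bases of the $V_\lambda$. A basis vector $v_T \in V_\lambda$ corresponds to a chain $T: \lambda_1 \nearrow \lambda_2 \nearrow \cdots \nearrow \lambda_n = \lambda$, and by the iterated multiplicity-free construction of the GT decomposition recalled in Section \ref{GT basis}, the one-dimensional subspace $V_T$ is precisely the intersection of the $S_i$-isotypic components of type $\lambda_i$ inside $V_\lambda$, as $i$ ranges over $1,\ldots,n$. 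In particular $v_T$ sits inside the $S_i$-isotypic component of $V$ of type $\lambda_i$ for every $i$.

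Combining the two observations, I conclude
\[
Z_i\, v_T = c_{i,\lambda_i}\, v_T, \qquad Z_{i-1}\, v_T = c_{i-1,\lambda_{i-1}}\, v_T,
\]
with the convention $Z_0 = 0$ when $i=1$. Consequently
\[
J_i\, v_T = (Z_i - Z_{i-1})\, v_T = \bigl(c_{i,\lambda_i} - c_{i-1,\lambda_{i-1}}\bigr)\, v_T,
\]
so $v_T$ is an eigenvector of $J_i$ with eigenvalue $c_{i,\lambda_i} - c_{i-1,\lambda_{i-1}}$.

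The main conceptual step is the second paragraph, i.e.\ confirming that $v_T$ genuinely lies in the asserted $S_i$-isotypic component rather than merely in some larger subspace it is adapted to. This is essentially the definitional content of a Gelfand-Tsetlin basis, but it is worth verifying carefully from the recursive construction $V_T = \bigcap_{i=1}^{n} V_{i,\lambda_i}$, where each $V_{i,\lambda_i}$ is the $S_i$-isotypic component of type $\lambda_i$ and the intersection is forced to be one-dimensional by the multiplicity-free branching rule.
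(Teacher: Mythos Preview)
Your proof is correct and follows essentially the same strategy as the paper: both show that each $Z_i$ acts as a scalar on every $S_i$-isotypic component of $V$, observe that a Gelfand--Tsetlin vector $v_T$ lies in the $\lambda_i$-isotypic component for each $i$, and then use $J_i = Z_i - Z_{i-1}$ to conclude. The only difference is in how the scalar action of $Z_i$ on isotypic components is justified: you invoke Schur's lemma directly (using that $Z_i$ is a class sum, hence central in $\mathbb{C}(S_i)$), whereas the paper argues that any eigenspace of $Z_i$ inside an isotypic component $H_i$ is simultaneously a $\mathbb{C}(S_i)$-submodule and an $\mathrm{End}(V)^{S_i}$-submodule, and then appeals to the minimality characterization of isotypic components to force the eigenspace to equal $H_i$. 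Your route via Schur's lemma is the more standard and more direct one; the paper's argument is a self-contained substitute that avoids citing Schur explicitly but is otherwise equivalent.
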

\begin{proof}
Let $b$ be an element of a Gelfand-Tsetlin basis of $V$. For $i=1$ the proposition is trivial so let $i$ be any element of $\{2,...,n\}$. Then the vector $b$ belongs to some isotypic component $H_i$ of the decomposition of $V$ as a representation of $S_i$. 

We will use the following characterization of isotypic components. Let $End(V)^{S_i}$ be the ring of interwining operators, that is, linear operators $Z \in End(V)$ such that $ZA=AZ$ for all $A\in \mathbb{C}(S_i)$. Observe that $V$ has a natural $End(V)^{S_i}$-module structure.
Then, a subspace of $V$ is an isotypic component of the action of $S_i$ if and only if it is a minimal element in the lattice of subspaces that are simultaneously a $\mathbb{C}(S_i)$-submodule and a $End(V)^{S_i}$-submodule of $V$. 

Since $H_i$ is an isotypic component it is a $\mathbb{C}(S_i)$-submodule of $V$. Since $Z_i \in \mathbb{C}(S_i)$ we see that $Z_i(H_i) \subseteq H_i$ . Let $H'_i$ be an eigenspace of the restriction $Z_i : H_i \rightarrow H_i$ with eigenvalue $\alpha_i$. Since $Z_i \in End(V)^{S_i}$, for any $A\in \mathbb{C}(S_i)$ and any $h \in H'_i$ we have
$$
Z_i(A(h)) = A(Z_i(h)) = \alpha_i A(h)
$$
This shows that  $H'_i$ is a $\mathbb{C}(S_i)$-submodule of $V$.

On the other hand, since $Z_i $ belongs to  $\mathbb{C}(S_i)$ it commutes with every element of $End(V)^{S_i}$. 
Then for any $Z \in End(V)^{S_i}$ and any $h \in H'_i$ we have
$$
Z_i(Z(h)) = Z(Z_i(h)) = \alpha_i Z(h).
$$
This shows that $H'_i$ is a $End(V)^{S_i}$-submodule of $V$.
Then $H'_i$ is simultaneously a $\mathbb{C}(S_i)$-submodule and a $End(V)^{S_i}$-submodule of $V$, and it is contained in $H_i$. Since $H_i$ is an isotypic component, it is minimal among subspaces with this property, then $H'_i=H_i$. This proves that $H_i$ is an eigenspace of $Z_i$. We have $b \in H_i$, then $b$ is an eigenvector of $Z_i$ for all $i \in \{2,...,n\}$. Since $J_i = Z_i - Z_{i-1}$, we see that $b$ is also an eigenvector of $J_i$ with eigenvalue $\alpha_i - \alpha_{i-1}$ for all $i \in \{2,...,n\}$.
\end{proof}

\begin{cor} \label{Eigenvector}
For $1 \leq i \leq n$, the one-dimensional subspace $\mathcal{F}_{\lambda_1...\lambda_{i}}^{c_{i+1} ... c_n} $  is invariant by the action of $J_j$ for $j=1,...,i$.
\end{cor}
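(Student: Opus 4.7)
\medskip

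The plan is to combine two facts that are already essentially in hand: the $S_i$-invariance of $\mathcal{F}^{c_{i+1}\dots c_n}$ (noted when the decomposition (\ref{F}) was set up) and the key observation from the proof of Proposition \ref{Basis elements are eigenvectors of JM operators} that, for each $j$, the sum-of-transpositions element $Z_j$ acts as a scalar on every $S_j$-isotypic component of $\mathcal{F}$.

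First, I would note that for each $j \in \{1,\dots,i\}$ we have $J_j \in \mathbb{C}(S_j) \subseteq \mathbb{C}(S_i)$. Since $\mathcal{F}^{c_{i+1}\dots c_n}$ is stable under $S_i$, it is in particular stable under the action of $J_j$ for every $j \le i$.

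Next I would show that $J_j$ acts as a scalar on the larger subspace $\mathcal{F}_{\lambda_1\dots\lambda_i}$. The proof of Proposition \ref{Basis elements are eigenvectors of JM operators} establishes that each isotypic component $\mathcal{F}_{j,\lambda_j}$ of $\mathcal{F}$ as an $S_j$-module is a $Z_j$-eigenspace, with some eigenvalue $\alpha_{\lambda_j}$. Because
\[
\mathcal{F}_{\lambda_1\dots\lambda_i} \;=\; \mathcal{F}_{1,\lambda_1}\cap\mathcal{F}_{2,\lambda_2}\cap\dots\cap\mathcal{F}_{i,\lambda_i},
\]
$Z_j$ acts on $\mathcal{F}_{\lambda_1\dots\lambda_i}$ as the scalar $\alpha_{\lambda_j}$ for every $j\le i$. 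Consequently $J_j = Z_j - Z_{j-1}$ acts on $\mathcal{F}_{\lambda_1\dots\lambda_i}$ as the scalar $\alpha_{\lambda_j}-\alpha_{\lambda_{j-1}}$ (with $\alpha_{\lambda_0}=0$), and in particular preserves this subspace.

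Finally, combining the two steps, $J_j$ preserves both $\mathcal{F}^{c_{i+1}\dots c_n}$ and $\mathcal{F}_{\lambda_1\dots\lambda_i}$, hence it preserves their intersection $\mathcal{F}_{\lambda_1\dots\lambda_i}^{c_{i+1}\dots c_n}$. By Theorem \ref{One-dimensional decomposition} this intersection is one-dimensional, so any nonzero vector in it is automatically an eigenvector of $J_j$ with eigenvalue $\alpha_{\lambda_j}-\alpha_{\lambda_{j-1}}$. There is no real obstacle here; the only point to verify carefully is that the argument from Proposition \ref{Basis elements are eigenvectors of JM operators} really gives $Z_j$ acting as a scalar on the full isotypic component (not merely preserving it), which follows from the minimality characterization of isotypic components used there.
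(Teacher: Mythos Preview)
Your argument is correct. The paper's proof is a bit more direct: it observes that $\mathcal{F}^{c_{i+1}\dots c_n}$ is itself a multiplicity-free $S_i$-module whose Gelfand--Tsetlin decomposition is exactly the one in (\ref{1}), and then applies Proposition~\ref{Basis elements are eigenvectors of JM operators} verbatim to that module. You instead work in the ambient space $\mathcal{F}$, where the $S_j$-action is \emph{not} multiplicity-free for $j<n$, so you cannot invoke the proposition as stated; you therefore extract from its proof the internal lemma that $Z_j$ acts as a scalar on each $S_j$-isotypic component (which indeed holds for any representation, by the minimality argument there or just by Schur's lemma), and combine this with the $S_i$-stability of $\mathcal{F}^{c_{i+1}\dots c_n}$ to conclude by intersecting two $J_j$-invariant subspaces. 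Both routes are short and rest on the same central-element fact; the paper's packaging has the advantage of using Proposition~\ref{Basis elements are eigenvectors of JM operators} as a black box, while yours makes the scalar action on $\mathcal{F}_{\lambda_1\dots\lambda_i}$ explicit, which is a slightly stronger intermediate statement.
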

\begin{proof}
From (\ref{1}) we have that $\mathcal{F}^{c_{i+1} ... c_n} $ is a multiplicity-free representation of $S_i$ such that the Gelfand-Tsetlin decomposition is given by the subspaces $\mathcal{F}_{\lambda_1...\lambda_{i}}^{c_{i+1} ... c_n} $. From Proposition \ref{Basis elements are eigenvectors of JM operators} we have that these subspaces are formed by eigenvectors of $J_j$ for $j=1,...,i$.
\end{proof}

Let $V$ be a multiplicity-free representation of $S_n$ and $B$ a Gelfand-Tsetlin basis of $V$. By Proposition \ref{Basis elements are eigenvectors of JM operators} there is a map $\alpha : B \rightarrow \mathbb{C}^n$ 
$$
\alpha(b)= (\alpha_1(b),...,\alpha_n(b))
$$
where $\alpha_i(b)$ is defined by 
$$
J_i b = \alpha_i(b) \ b
$$

In \cite{Vershik}  --Proposition 5.3 together with Theorem 5.8-- this map is completely described by giving an explicit formula for the eigenvalues $\alpha_i(b)$ as follows. 
Let $\lambda_1 \nearrow \lambda_2 \nearrow ... \nearrow \lambda_n$ be the chain corresponding to the basis element $b$, that is, $b$ is in the one-dimensional subspace
$$
V_{1,\lambda_1}  \cap ... \cap  V_{n,\lambda_{n}} 
$$
where $V_{i,\lambda_i}$ is the isotypic component of the action of the subgroup $S_i$ on $V$ corresponding to the representation $\lambda_i$. We identify each $\lambda_i$ with its corresponding Young diagram and set $\lambda_0 = \emptyset$. The Young diagram $\lambda_i$ is obtained from $\lambda_{i-1}$ by adding a single box 
$$\square = \lambda_{i} / \lambda_{i-1}$$
The content of a box in a Young diagram is defined as 
$$
c(\square) = x\mbox{-coordinate of}\  \square\  \ - \  y\mbox{-coordinate of}\  \square
$$

\begin{figure}[ht] 
\begin{center}
\includegraphics[scale=0.3]{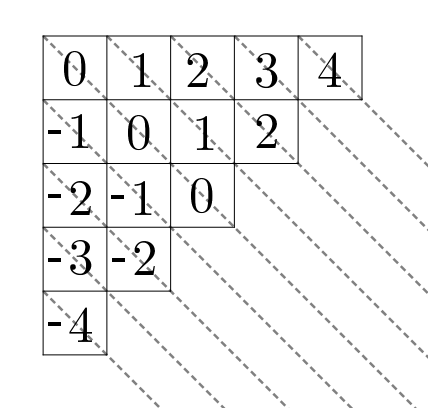} 
\caption{A Young diagram where each box is filled with its content.}
\label{Contents}
\end{center}
\end{figure}

According to the main theorem in \cite{Vershik}, the eigenvalue $\alpha_i(b)$ is given by
\begin{align} \label{eigenvalues}
\alpha_i(b) = c(\lambda_{i} / \lambda_{i-1})
\end{align}
and the map $\alpha$ is a bijection between the Gelfand-Tsetlin base $B$ of $V$ and the vectors in $\mathbb{C}^n$ of the form
$$
( c(\lambda_{1} / \lambda_{0}),...,c(\lambda_{n} / \lambda_{n-1})).
$$
where $\lambda_1  \nearrow ... \nearrow \lambda_n$ is a standard Young tableau with $\lambda_n$ in the decomposition of $V$.
For example, the map $\alpha$ gives the correspondence
$$\young(134,25) \ \ \ \ \ \  \longleftrightarrow \ \ \ \ \ \ (0,-1,1,2,0)  $$

\section{Efficient computation of the matrices ${[B_i]}_{B_{i-1}}$}
\label{Efficient computation of the matrices}

We have shown that the Fourier transform on the Johnson graph can be performed by the succesive applications of the matrices ${[B_{i-1}]}_{B_{i}}$ for $i= 1,...,n$, and that the application of each matrix uses $O(\binom{n}{k})$ operations. Our aim in this section is to show that the matrix ${[B_{i+1}]}_{B_{i}}$ can be computed from ${[B_{i}]}_{B_{i-1}}$ using $O(\binom{n}{k})$ operations. This will enable to compute the Fourier transform in $O(n \binom{n}{k})$ operations.

We construct  ${[B_{i+1}]}_{B_{i}}$ in two steps. First we obtain from ${[B_{i}]}_{B_{i-1}}$ the matrix ${[J_{i+1}]}_{B_{i}}$ and in a second step  we construct ${[B_{i+1}]}_{B_{i}}$ from ${[J_{i+1}]}_{B_{i}}$ .

\subsection{First step: obtaining ${[J_{i+1}]}_{B_{i}}$ from  ${[B_{i}]}_{B_{i-1}}$}

This step is based on the formula 
$$
J_{i+1} s_i = s_i J_i + 1 
$$
where $s_i$ is the the transposition $(i (i+1))$ for $i=1,...,n-1$.
From this formula we derive
\begin{align} 
\label{recurrence1}
{[J_{i+1}]}_{B_{i}} &= (\ {[s_{i}]}_{B_{i}}\  {[J_{i}]}_{B_{i}} + I \ )\  {[s_{i}]}_{B_{i}} \\
\label{recurrence2}
{[s_{i}]}_{B_{i}}  &=  {[B_{i-1}]}_{B_{i}} \  {[s_{i}]}_{B_{i-1}} \  {[B_{i}]}_{B_{i-1}}
\end{align}

Recall that the bases $B_i$ are orthonormal, so that ${[B_{i-1}]}_{B_{i}}$ is the transpose of ${[B_{i}]}_{B_{i-1}}$. Equations (\ref{recurrence1}) and (\ref{recurrence2}) show that we can compute ${[J_{i+1}]}_{B_{i}}$ from ${[B_{i}]}_{B_{i-1}}$ provided we know the matrices $ {[s_{i}]}_{B_{i-1}}$ and ${[J_{i}]}_{B_{i}}$. On the one hand we observe that ${[s_{i}]}_{B_{i-1}}$ is a permutation matrix which is easily described in terms of the labels of the basis $B_{i-1}$. On the other hand, we can see that the matrix ${[J_{i}]}_{B_{i}}$ is a diagonal matrix whose diagonal entries are the eigenvalues described by formula (\ref{eigenvalues}).

We claim that once ${[s_{i}]}_{B_{i-1}}$,  ${[J_{i}]}_{B_{i}}$ and ${[B_{i}]}_{B_{i-1}}$ have been computed, the computation of ${[J_{i+1}]}_{B_{i}}$ using  (\ref{recurrence1}) and (\ref{recurrence2}) can be performed using $O(\binom{n}{k})$ operations. 
The key point is that all the operators and bases involved in Equations (\ref{recurrence1}) and (\ref{recurrence2}) are adapted to the decomposition
\begin{equation} \label{Key decomposition}
\mathcal{F} =  \bigoplus  \mathcal{F}_{\lambda_1...\lambda_{i-1}}^{c_{i+2} ... c_n} 
\end{equation}
that is, the decomposition in Theorem \ref{Adapted decomposition} with $p=i-1$ and $q=i+2$ (see Figure \ref{Coarser decomposition}).Since each subspace in this decomposition has dimension at most $4$, we see from Lemma \ref{Block-diagonal} that if $B_{i}$ and $B_{i-1}$ are properly ordered  then all the matrices appearing in  (\ref{recurrence1}) and (\ref{recurrence2}) are block-diagonal with each block of size at most $4$. Let us prove this fact.

\begin{figure}[ht] 
\begin{center}
\includegraphics[scale=0.8]{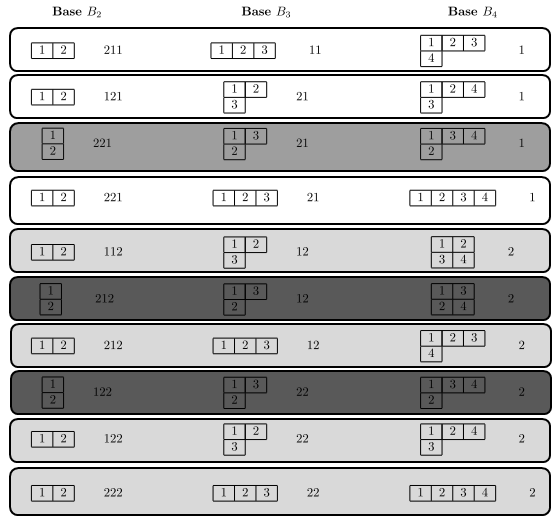} 
\caption{An illustration of the case $n=5$, $k=2$, $i=3$. The three bases $B_{i-1}$, $B_{i}$ and $B_{i+1}$ are adapted to the decomposition $\bigoplus  \mathcal{F}_{\lambda_1...\lambda_{i-1}}^{c_{i+2} ... c_n} $. Each element of these bases is coloured according to the subspace of the decomposition containing it.}
\label{Coarser decomposition}
\end{center}
\end{figure}

\begin{lemma} \label{Size 4}
For $i=1,...,n-1$ let 
$$
\mathcal{F} =  \bigoplus  \mathcal{F}_{\lambda_1...\lambda_{i-1}}^{c_{i+2} ... c_n} 
$$
be the decomposition in Theorem \ref{Adapted decomposition} with $p=i-1$ and $q=i+2$. 
Then 

a) the dimension of each subspace $ \mathcal{F}_{\lambda_1...\lambda_{i-1}}^{c_{i+2} ... c_n} $ is at most $4$

b) the operators $s_i$ and $J_i$ are adapted to this decomposition 

c) the bases $B_{i-1}$, $B_{i}$ and $B_{i+1}$ are adapted to this decomposition
 \end{lemma}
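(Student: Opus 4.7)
The plan is to handle the three parts separately, each being a direct consequence of earlier results together with elementary centrality and support arguments.

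For (a), I would refine the decomposition by invoking Theorem \ref{One-dimensional decomposition} with $q=i+2$, which further decomposes each $\mathcal{F}_{\lambda_1\ldots\lambda_{i-1}}^{c_{i+2}\ldots c_n}$ as a direct sum of one-dimensional subspaces $\mathcal{F}_{\lambda_1\ldots\lambda_{i+1}}^{c_{i+2}\ldots c_n}$ indexed by the extensions $\lambda_{i-1}\nearrow\lambda_i\nearrow\lambda_{i+1}$ with $\lambda_{i+1}$ a two-row shape $(i+1-a',a')$. Because all intermediate shapes have at most two rows, adding a single box to $\lambda_{i-1}$ admits at most two choices (first or second row), and likewise for the step from $\lambda_i$ to $\lambda_{i+1}$. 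This gives at most $2\cdot 2=4$ summands, and hence the dimension bound.

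For (b), I would verify separately that $s_i$ and $J_i$ preserve each of the two ingredients of the intersection defining $\mathcal{F}_{\lambda_1\ldots\lambda_{i-1}}^{c_{i+2}\ldots c_n}$. The transposition $s_i=(i,i+1)$ commutes with every element of $S_{i-1}$ since they move disjoint indices; and for $J_i=Z_i-Z_{i-1}$ the class sum $Z_i$ is central in $\mathbb{C}(S_i)$ (hence commutes with $\mathbb{C}(S_{i-1})$), while $Z_{i-1}$ is central in $\mathbb{C}(S_{i-1})$. Consequently both operators commute with $\mathbb{C}(S_{i-1})$, so they preserve every $S_{i-1}$-isotypic component and in particular the iterated intersection $\mathcal{F}_{\lambda_1\ldots\lambda_{i-1}}$. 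Separately, $s_i$ and each transposition $(j,i)$ with $j\leq i-1$ appearing in $J_i$ act as the identity on positions $i+2,\ldots,n$ of a word in $X$, so they preserve $\mathcal{F}^{c_{i+2}\ldots c_n}$. Intersecting yields the claim.

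For (c), the statement is immediate from Lemma \ref{Adapted bases} applied three times with $p=i-1$ and $q=i+2$, once for each basis index $j\in\{i-1,i,i+1\}$; the required inequalities $1\leq p\leq j<q\leq n+1$ hold in each case since $i\geq 1$ and $i+2\leq n+1$ follow from the hypothesis $1\leq i\leq n-1$. The main point demanding care is really (a): one must be attentive to the branching constraint so that only two-row diagrams appear and the combinatorial count gives exactly the bound $4$. The arguments for (b) and (c) are then routine unwrappings of definitions together with the centrality of the class sums $Z_i$ and $Z_{i-1}$.
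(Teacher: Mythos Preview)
Your argument is correct, and part (c) matches the paper exactly. Parts (a) and (b), however, take a genuinely different route from the paper's proof.

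For (a), the paper refines the subspace $\mathcal{F}_{\lambda_1\ldots\lambda_{i-1}}^{c_{i+2}\ldots c_n}$ on the \emph{word} side, writing it as the span of the four subspaces obtained by prepending $c_ic_{i+1}\in\{11,12,21,22\}$ to the word, and then applies Theorem~\ref{One-dimensional decomposition} with $q=i$. You instead refine on the \emph{tableau} side, extending $\lambda_1\nearrow\cdots\nearrow\lambda_{i-1}$ by two further boxes and invoking Theorem~\ref{One-dimensional decomposition} with $q=i+2$; the bound of $4$ then comes from the branching combinatorics of two-row shapes. Both are valid; the paper's version has the advantage that the four word-refined pieces are exactly what $s_i$ permutes, so it feeds directly into their proof of (b).

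For (b), the paper shows that $s_i$ is adapted by observing it permutes those four word-refined subspaces, and handles $J_i$ via Corollary~\ref{Eigenvector}. Your argument is more self-contained: you use that $s_i$ and $J_i=Z_i-Z_{i-1}$ commute with $\mathbb{C}(S_{i-1})$ and fix positions $i+2,\ldots,n$. One small imprecision: commuting with $\mathbb{C}(S_{i-1})$ gives preservation of each $S_j$-isotypic component for every $j\leq i-1$ (since $S_j\subseteq S_{i-1}$), not merely the $S_{i-1}$-isotypic components; it is this that yields preservation of the iterated intersection $\mathcal{F}_{\lambda_1\ldots\lambda_{i-1}}$. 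With that clarification your argument goes through, and it avoids appealing to Corollary~\ref{Eigenvector}.
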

\begin{proof}
For a) observe that the subspace $ \mathcal{F}_{\lambda_1...\lambda_{i-1}}^{c_{i+2} ... c_n} $ is spanned by the four subspaces 
\begin{equation} \label{Four subspaces}
 \mathcal{F}_{\lambda_1...\lambda_{i-1}}^{11c_{i+2} ... c_n}  , \ \ 
 \mathcal{F}_{\lambda_1...\lambda_{i-1}}^{12c_{i+2} ... c_n} ,\ \ 
 \mathcal{F}_{\lambda_1...\lambda_{i-1}}^{21c_{i+2} ... c_n} ,\ \ 
 \mathcal{F}_{\lambda_1...\lambda_{i-1}}^{22c_{i+2} ... c_n} 
\end{equation}
and according to Theorem \ref{One-dimensional decomposition} these subspaces have dimension at most one.

For b) note that the operator $s_i$ acts by interchanging the subspaces in (\ref{Four subspaces}), and these subspaces span $ \mathcal{F}_{\lambda_1...\lambda_{i-1}}^{c_{i+2} ... c_n} $,  so $s_i$ stabilizes it. Similarly, to see that $J_i$ stabilizes it, observe that 
$ \mathcal{F}_{\lambda_1...\lambda_{i-1}}^{c_{i+2} ... c_n} $ is spanned by the subspaces of the form
$ \mathcal{F}_{\lambda_1...\lambda_{i-1} \lambda}^{c\ c_{i+2} ... c_n} $
where $\lambda$ is a Young diagram and $c$ is a letter. Then
by Corollary \ref{Eigenvector} the operator $J_i$ stabilizes each one of these last subspaces.

Part c) is an instance of Lemma \ref{Adapted bases}.
\end{proof}

\begin{prop} \label{Step one}
Let us assume that the matrix ${[B_{i}]}_{B_{i-1}}$ have been computed for some $i$ such that $1 \leq i \leq n-1$. Then the matrix ${[J_{i+1}]}_{B_{i}}$ can be computed in  $O(\binom{n}{k})$ operations.
\end{prop}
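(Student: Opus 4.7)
The plan is to use the recurrence relations \eqref{recurrence1} and \eqref{recurrence2} as the computational engine, and to exploit Lemma \ref{Size 4} to bound the cost of every matrix operation involved by $O(\binom{n}{k})$.

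First I would check that, independently of the given $[B_i]_{B_{i-1}}$, the two other auxiliary matrices appearing on the right-hand sides can each be assembled in $O(\binom{n}{k})$ operations directly from the combinatorial labels of the bases. The matrix $[J_i]_{B_i}$ is diagonal by Proposition \ref{Basis elements are eigenvectors of JM operators}, and its diagonal entries are given by the content formula \eqref{eigenvalues} applied to the tableau part of each label; each entry is obtainable in $O(1)$ from the label, so the full diagonal is built in $O(\binom{n}{k})$ operations. The matrix $[s_i]_{B_{i-1}}$ is a signed permutation matrix: since $s_i=(i,i+1)$ commutes with $S_{i-1}$ and acts on a word by swapping its letters in positions $i$ and $i+1$, the operator $s_i$ maps the one-dimensional subspace $\mathcal{F}_{\lambda_1\ldots\lambda_{i-1}}^{c_ic_{i+1}\ldots c_n}$ isometrically onto $\mathcal{F}_{\lambda_1\ldots\lambda_{i-1}}^{c_{i+1}c_ic_{i+2}\ldots c_n}$, so each basis vector of $B_{i-1}$ is sent to $\pm$ another basis vector, and the correspondence between the two labels is explicit. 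The matrix $[B_{i-1}]_{B_i}$ is simply the transpose of the given orthonormal matrix $[B_i]_{B_{i-1}}$.

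The heart of the argument is then to combine these inputs via \eqref{recurrence1} and \eqref{recurrence2} while controlling the arithmetic cost. For this I would invoke Lemma \ref{Size 4}: after ordering the bases $B_{i-1}$ and $B_i$ so that basis vectors lying in the same subspace $\mathcal{F}_{\lambda_1\ldots\lambda_{i-1}}^{c_{i+2}\ldots c_n}$ appear contiguously, parts (b) and (c) of that lemma tell us that every matrix appearing in \eqref{recurrence1} and \eqref{recurrence2}—namely $[J_i]_{B_i}$, $[s_i]_{B_{i-1}}$, $[B_i]_{B_{i-1}}$, $[B_{i-1}]_{B_i}$, and consequently $[s_i]_{B_i}$ and $[J_{i+1}]_{B_i}$—is block-diagonal with blocks of size at most $4$. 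Consequently each matrix multiplication or addition in \eqref{recurrence1}–\eqref{recurrence2} decomposes into independent operations on the corresponding $4\times 4$ blocks.

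Finally I would finish the arithmetic count. Each of the $O(1)$-sized block multiplications and additions takes $O(1)$ operations, and since the subspaces $\mathcal{F}_{\lambda_1\ldots\lambda_{i-1}}^{c_{i+2}\ldots c_n}$ partition $\mathcal{F}$ into pieces of total dimension $\binom{n}{k}$, the number of blocks is at most $\binom{n}{k}$. Therefore each of the constant number of matrix operations used in \eqref{recurrence1} and \eqref{recurrence2} contributes $O(\binom{n}{k})$ operations, and $[J_{i+1}]_{B_i}$ is obtained within this same bound. The only place where care is really required is verifying the hypothesis of Lemma \ref{Size 4} for \emph{every} matrix on the right-hand side of \eqref{recurrence1}–\eqref{recurrence2}; once this adaptedness is in hand, the proof reduces to block-diagonal bookkeeping.
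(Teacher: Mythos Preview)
Your proposal is correct and follows essentially the same approach as the paper: both arguments feed the recurrences \eqref{recurrence1}--\eqref{recurrence2} with the diagonal matrix $[J_i]_{B_i}$ (via the content formula) and the permutation-type matrix $[s_i]_{B_{i-1}}$, then invoke Lemma~\ref{Size 4} to reduce every matrix operation to independent blocks of size at most $4$ over at most $\binom{n}{k}$ subspaces. The only cosmetic differences are that the paper records an explicit constant ($68$ operations per block) and calls $[s_i]_{B_{i-1}}$ simply a permutation matrix rather than a signed one.
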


\begin{proof}
The algorithm for computing ${[J_{i+1}]}_{B_{i}}$ proceed by restricting the bases and the operators appearing in (\ref{recurrence1}) and (\ref{recurrence2})  to each subspace  $\mathcal{F}_{\lambda_1...\lambda_{i-1}}^{c_{i+2} ... c_n}$ in the decomposition  (\ref{Key decomposition}), one at a time.  
Once the operators and the bases have been restricted to  $\mathcal{F}_{\lambda_1...\lambda_{i-1}}^{c_{i+2} ... c_n}$, by Lemma \ref{Size 4} the matrices in (\ref{recurrence1}) and (\ref{recurrence2}) are square matrices of size at most $4$. 

The matrix ${[s_{i}]}_{B_{i-1}}$ is a permutation matrix corresponding to the permutation of the subspaces in (\ref{Four subspaces}). By  Corollary \ref{Eigenvector}, ${[J_{i}]}_{B_{i}}$  is a diagonal matrix whose diagonal entries can be computed using formula (\ref{eigenvalues}) for the eigenvalues of the Jucys-Murphy operators.

The formulas (\ref{recurrence1}) and (\ref{recurrence2}) involve four matrix multiplications and one matrix sum, so the number of arithmetic operations required to compute ${[J_{i+1}]}_{B_{i}}$ is at most $4 \   4^2 + 4 = 68$.  Since the number of subspaces in the decomposition (\ref{Key decomposition}) is at most $\binom{n}{k}$, then the total number of arithmetic operations does not exceed $68 \binom{n}{k}$.
\end{proof}

\subsection{Second step: obtaining ${[B_{i+1}]}_{B_{i}}$ from ${[J_{i+1}]}_{B_{i}}$}

\begin{prop} \label{Step two}
Let us assume that the matrix ${[J_{i+1}]}_{B_{i}}$ have been computed for some $i$ such that $1 \leq i \leq n-1$. Then the matrix ${[B_{i+1}]}_{B_{i}}$ can be computed in  $O(\binom{n}{k})$ operations.
\end{prop}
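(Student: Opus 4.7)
The plan is to exploit the block structure of Theorem \ref{Sparsity of matrix} together with the fact that the elements of $B_{i+1}$ are, block-by-block, nothing but the normalized eigenvectors of $J_{i+1}$ with prescribed eigenvalues given by formula (\ref{eigenvalues}). First I would observe that, just as in the proof of Theorem \ref{Sparsity of matrix} but shifted by one, both $B_i$ and $B_{i+1}$ are adapted to the refined decomposition
$$
\mathcal{F} \;=\; \bigoplus \mathcal{F}_{\lambda_1\ldots\lambda_i}^{c_{i+2}\ldots c_n}
$$
(the case $p=i$, $q=i+2$ of Theorem \ref{Adapted decomposition}), and each summand has dimension at most two because it is spanned by $\mathcal{F}_{\lambda_1\ldots\lambda_i}^{1\,c_{i+2}\ldots c_n}$ and $\mathcal{F}_{\lambda_1\ldots\lambda_i}^{2\,c_{i+2}\ldots c_n}$, both of dimension at most one by Theorem \ref{One-dimensional decomposition}. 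Hence, once the bases are ordered compatibly with this decomposition, the matrix $[B_{i+1}]_{B_i}$ is block-diagonal with blocks of size at most two.

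Next I would show that $J_{i+1}$ preserves each such block. Since $Z_{i+1}$ is central in $\mathbb{C}(S_{i+1})$ and $Z_i$ is central in $\mathbb{C}(S_i)$, the operator $J_{i+1}=Z_{i+1}-Z_i$ commutes with every element of $\mathbb{C}(S_j)$ for $j\le i$; hence it preserves each $S_j$-isotypic component of $\mathcal{F}^{c_{i+2}\ldots c_n}$ for $j\le i$, and therefore preserves their intersection $\mathcal{F}_{\lambda_1\ldots\lambda_i}^{c_{i+2}\ldots c_n}$. Consequently, the matrix $[J_{i+1}]_{B_i}$ that we obtained in Proposition \ref{Step one} is itself block-diagonal with the same block structure, and on each block it is symmetric (being a sum of transpositions, hence self-adjoint, written in an orthonormal basis).

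Inside a fixed block, the elements of $B_{i+1}$ are precisely the unit vectors spanning the one-dimensional subspaces $\mathcal{F}_{\lambda_1\ldots\lambda_i\mu}^{c_{i+2}\ldots c_n}$ for those $\mu$ with $\lambda_i \nearrow \mu$, and by Corollary \ref{Eigenvector} each of them is an eigenvector of $J_{i+1}$. Formula (\ref{eigenvalues}) supplies the corresponding eigenvalue as the integer content $c(\mu/\lambda_i)$. Therefore computing the block of $[B_{i+1}]_{B_i}$ amounts to solving a $2\times 2$ (or $1\times 1$) linear system $(J_{i+1}|_{\text{block}} - c(\mu/\lambda_i)\,I)v=0$ with known right-hand side and known eigenvalue, and then normalizing; in the two-dimensional case the second column of the block is then determined as the unit vector orthogonal to the first. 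Each of these tasks is $O(1)$.

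Finally I would count: the number of blocks in the decomposition is bounded by $\dim \mathcal{F}=\binom{n}{k}$, so performing the constant-work eigenvector extraction for each block yields the full matrix $[B_{i+1}]_{B_i}$ in $O(\binom{n}{k})$ arithmetic operations, which is the claim. The step I expect to be delicate is the bookkeeping that identifies, for each block, which two eigenvalues of $J_{i+1}$ to use — this requires matching each pair $(\lambda_i, c_{i+2}\ldots c_n)$ with the (at most two) admissible shapes $\mu$ of standard Young tableaux $\lambda_i \nearrow \mu$ compatible with the constraints of Theorem \ref{Adapted decomposition}; but this is a purely combinatorial lookup that does not add to the algebraic operation count.
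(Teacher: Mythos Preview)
Your proposal is correct and follows essentially the same route as the paper: both use the decomposition of Theorem \ref{Adapted decomposition} with $p=i$, $q=i+2$, observe that $J_{i+1}$ and the two bases are adapted to it with blocks of size at most two, and then recover each block of $[B_{i+1}]_{B_i}$ by solving a $2\times 2$ eigenvector problem with eigenvalues prescribed by formula (\ref{eigenvalues}). The only cosmetic differences are that the paper names the two eigenvalues explicitly as $i-a$ and $a-1$ and solves both linear systems, whereas you obtain the second column via orthogonality; and you give a direct commutation argument for why $J_{i+1}$ stabilizes each block rather than invoking Corollary \ref{Eigenvector} alone.
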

\begin{proof}
The elements of the basis $B_{i+1}$ are non-zero vectors in the subspaces of the form 
$ \mathcal{F}_{\lambda_1...\lambda_{i+1}}^{c_{i+2} ... c_n}$. By Corollary \ref{Eigenvector}, such vectors are eigenvectors of the operator $J_{i+1}$. In order to find all such eigenvectors written in the basis $B_i$ we must find the column vectors $[v]_{B_i}$ such that 
\begin{equation} \label{Column eigenvectors}
(\ [J_{i+1}]_{B_i} - \lambda I\ ) \ [v]_{B_i} = 0
\end{equation}
for some eigenvalue $\lambda$ of $J_{i+1}$. Once again, the calculation can be simplified by restricting to appropriate subspaces. In effect, the operator $J_{i+1}$ and the bases $B_{i+1}$ and $B_{i}$ are adapted to the decomposition 
\begin{equation} \label{Second step}
\mathcal{F} =  \bigoplus  \mathcal{F}_{\lambda_1...\lambda_{i}}^{c_{i+2} ... c_n} 
\end{equation}
whose subspaces have dimension at most two. We proceed by restricting to each subspace and obtaining the corresponding blocks of $[B_{i+1}]_{B_i}$, one at a time. When the operator and the bases are restricted to $\mathcal{F}_{\lambda_1...\lambda_{i}}^{c_{i+2} ... c_n}$,  equation (\ref{Column eigenvectors}) is a homogeneus linear system of equations in at most two variables.

If $\mathcal{F}_{\lambda_1...\lambda_{i}}^{c_{i+2} ... c_n}$ is one-dimensional, then the corresponding block of $[B_{i+1}]_{B_i}$ is of size one with $1$ as the only entry, so we are done.
If $\mathcal{F}_{\lambda_1...\lambda_{i}}^{c_{i+2} ... c_n}$ is two-dimensional, then there are two different eigenvalues which correspond to the two ways in which a letter can be inserted by the Robinson-Schensted insertion step. If the form of the Young diagram $\lambda_i$ is  $(i-a,a)$, then, by the eigenvalue formula (\ref{eigenvalues}), the two eigenvalues of the restriction of $J_{i+1}$  to $ \mathcal{F}_{\lambda_1...\lambda_{i}}^{c_{i+2} ... c_n}$ are $i-a$ and $a-1$ (see Figure \ref{Possible eigenvalues}).

\begin{figure}[ht] 
\begin{center}
\includegraphics[scale=0.35]{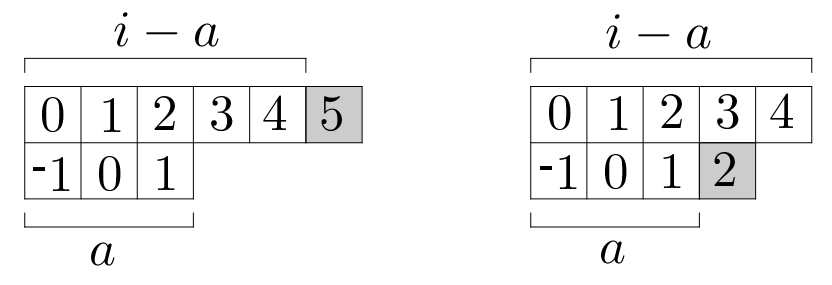} 
\caption{The number in the boxes indicates their contents. The content of the inserted box is the eigenvalue of the operator $J_{i+1}$, which can be $i-a$ (left) or $a-1$ (right).}
\label{Possible eigenvalues}
\end{center}
\end{figure}

Solving each of the two-variable linear systems
\begin{equation} \label{i-a}
(\ [J_{i+1}]_{B_i} - (i-a) I\ ) \ [v]_{B_i} = 0  
\end{equation}
\begin{equation} \label{a-1}
(\ [J_{i+1}]_{B_i} - (a-1) I\ ) \ [v]_{B_i} = 0
\end{equation}
we obtain the two eigenvectors of $J_{i+1}$ restricted to $ \mathcal{F}_{\lambda_1...\lambda_{i}}^{c_{i+2} ... c_n}$. Normalizing each eigenvector we construct an orthogonal matrix of size two which correspond to the restriction of  $[B_{i+1}]_{B_i}$ to $ \mathcal{F}_{\lambda_1...\lambda_{i}}^{c_{i+2} ... c_n}$. 

The number of arithmetic operations required to solve (\ref{i-a}) and (\ref{a-1}) is bounded by a constant that does not depend on $n$ and $k$. Since there are at most  $\binom{n}{k}$ subspaces in the decomposition (\ref{Second step}), then the number of arithmetic operations involved in the second step is $O(\binom{n}{k})$.
\end{proof}

As a consequence we obtain our main theorem.
\begin{thm} \label{Main}
The Fourier transform on the Johnson graph $J(n,k)$ can be computed using $O(n \binom{n}{k})$ arithmetic operations.
\end{thm}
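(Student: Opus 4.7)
The proof will be a straightforward inductive combination of the two propositions just established (Propositions \ref{Step one} and \ref{Step two}) together with the application bound of Theorem \ref{Main theorem}. The plan is to show that the precomputation of all the sparse matrices $[B_i]_{B_{i-1}}$ can be carried out in $O(n \binom{n}{k})$ operations, and then invoke Theorem \ref{Main theorem} to perform the actual transform within the same asymptotic budget.

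First I would set up the base case. The basis $B_0$ is the delta function basis and the basis $B_1$ is determined by decomposing $\mathcal{F}$ according to the single letter $c_1 \in \{1,2\}$; since each subspace $\mathcal{F}^{c_1}$ with a $1$-element Young-tableau label is one-dimensional, the matrix $[B_1]_{B_0}$ is simply the identity (as already noted in the proof of Theorem \ref{Main theorem}). Likewise $[J_1]_{B_0} = 0$ by definition of $J_1$. This gives a free starting point for the recursion, using zero operations.

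Next I would run the induction on $i$ from $1$ up to $n-1$. Assume inductively that $[B_i]_{B_{i-1}}$ has been computed. By Proposition \ref{Step one}, we obtain $[J_{i+1}]_{B_i}$ from $[B_i]_{B_{i-1}}$ (together with the permutation matrix $[s_i]_{B_{i-1}}$, which is read off from the labels of $B_{i-1}$ in $O(\binom{n}{k})$ time, and the diagonal matrix $[J_i]_{B_i}$, whose entries are given by the explicit content formula \eqref{eigenvalues}) using $O(\binom{n}{k})$ operations. Then by Proposition \ref{Step two}, we obtain $[B_{i+1}]_{B_i}$ from $[J_{i+1}]_{B_i}$ in another $O(\binom{n}{k})$ operations. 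Summing these costs over $i = 1, \dots, n-1$ produces a total precomputation cost of $O(n \binom{n}{k})$.

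Finally, with all matrices $[B_{i-1}]_{B_i}$ in hand (obtained by transposing each orthogonal matrix $[B_i]_{B_{i-1}}$, which requires no arithmetic), Theorem \ref{Main theorem} gives that $[f]_{B_n}$ can be computed from $[f]_{B_0}$ in at most $2(n-1)\binom{n}{k}$ operations. Adding this to the precomputation bound yields the claimed overall complexity of $O(n \binom{n}{k})$. There is essentially no hard step remaining at this point: the genuine difficulty was absorbed into the design of the intermediate bases $B_i$ (Section \ref{Intermediate}) and into the compatibility of the Jucys--Murphy recursion with the decomposition \eqref{Key decomposition} (Lemma \ref{Size 4}); the final theorem is just the accounting that stitches these ingredients together.
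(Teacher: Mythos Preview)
Your proof is correct and follows essentially the same approach as the paper: combine Propositions \ref{Step one} and \ref{Step two} inductively to build each $[B_{i+1}]_{B_i}$ in $O(\binom{n}{k})$ operations, then use the sparsity established in Theorem \ref{Sparsity of matrix} for the application. The only (inessential) organizational difference is that the paper interleaves the construction of each matrix with its immediate application to the running vector $[f]_{B_i}$, whereas you first precompute all the matrices and then invoke Theorem \ref{Main theorem}; both orderings yield the same $O(n\binom{n}{k})$ bound.
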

\begin{proof}
The Fourier transform consists in the application of the matrix $[B_1]_{B_n}$ to a vector $[f]_{B_0}$ where 
$$
[B_0]_{B_n} =  [B_{n-1}]_{B_n}  \  \ ...\ \ [B_2]_{B_3} \ \  [B_1]_{B_2}\
$$
We start contructing $[B_1]_{B_2}$ from $ [B_0]_{B_1}=I$ using $O(\binom{n}{k})$ operations according to Propositions \ref{Step one} and \ref{Step two}, and apply it to $[f]_{B_1}$ using $O(\binom{n}{k})$ in virtue of Theorem \ref{Sparsity of matrix}, obtaining the vector  $[f]_{B_2}$. In the same way we construct $[B_2]_{B_3}$ from $ [B_1]_{B_2}$ and apply it to $[f]_{B_2}$ using $O(\binom{n}{k})$ operations to obtain $[f]_{B_3}$, and so on. Since this process finishes after $n-1$ steps, then the theorem follows.
\end{proof}

\section{Application to the computation of isotypic components}
\label{Applications}

The upper bound we obtained for the algebraic complexity of the Fourier transform can be applied to the problem of computing the isotypic projections of a given function on the Johnson graph.

For $a=0,...,s$, let $\mathcal{F}_a$ be the isotypic component of  $\mathcal{F}$ corresponding to the Young diagram $(n-a,a)$ under the action of the group $S_n$. Since these components are orthogonal and expand the space $\mathcal{F}$, given a function $f \in  \mathcal{F}$ there are uniquely determined functions $f_a \in  \mathcal{F}_a$ such that 
$$
f= \sum_{a=0}^{s} f_a
$$
For $H\subseteq \{0,...,s\}$ let $f_H$ be defined by
$$
f_H=\sum_{a \in H} f_a
$$

\begin{thm}
\label{Single isotypic component}
Assume that the matrices $[B_{i-1}]_{B_{i}}$ for $i=2,3,...,n$ have been computed. Given a column vector  $[f]_{B_0}$ with $f \in \mathcal{F}$, the column vector $[f_H]_{B_0}$ can be computed using at most $4(n-1) \binom{n}{k}$  operations.
\end{thm}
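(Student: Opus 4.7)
The plan is to reduce the computation of $[f_H]_{B_0}$ to three stages: a forward Fourier transform, a coordinate-wise projection in the Gelfand-Tsetlin basis, and an inverse Fourier transform.

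First, I would apply the factorization $[B_0]_{B_n} = [B_{n-1}]_{B_n} \cdots [B_1]_{B_2}[B_0]_{B_1}$ to the input $[f]_{B_0}$, producing $[f]_{B_n}$. By Theorem \ref{Main theorem}, this costs at most $2(n-1)\binom{n}{k}$ operations, since each of the $n-1$ sparse matrices has at most two nonzero entries per column.

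Second, the key observation is that projection onto $\bigoplus_{a \in H}\mathcal{F}_a$ is diagonal in the basis $B_n$. Indeed, $B_n$ is a Gelfand-Tsetlin basis of $\mathcal{F}$, so each element of $B_n$ lies in exactly one of the irreducible $S_n$-isotypic components $\mathcal{F}_a$ (the one corresponding to the terminal Young diagram $\lambda_n = (n-a,a)$ of its labelling standard tableau). Thus $[f_H]_{B_n}$ is obtained from $[f]_{B_n}$ by zeroing out the coordinates indexed by standard tableaux whose shape $(n-a,a)$ has $a \notin H$. This requires no arithmetic operations in the model used in the paper (it only reorders/masks entries), and in any case at most $\binom{n}{k}$ assignments.

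Third, I would apply the inverse Fourier transform to $[f_H]_{B_n}$ to recover $[f_H]_{B_0}$. Since each $B_i$ is orthonormal, $[B_i]_{B_{i-1}} = [B_{i-1}]_{B_i}^{T}$, so
\begin{equation*}
[B_n]_{B_0} \;=\; [B_0]_{B_1}^T\, [B_1]_{B_2}^T\, \cdots\, [B_{n-1}]_{B_n}^T,
\end{equation*}
is also a product of $n-1$ matrices, each with at most two nonzero entries per column (the transpose of a matrix with at most two nonzero entries per row; but by Theorem \ref{Sparsity of matrix} these matrices are in fact block-diagonal with $2\times 2$ blocks, so their transposes have the same sparsity structure). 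Hence the inverse transform costs another $2(n-1)\binom{n}{k}$ operations. Summing the three stages gives the bound $4(n-1)\binom{n}{k}$.

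No step presents a real obstacle once one notes that the Gelfand-Tsetlin basis diagonalizes the isotypic projections and that the factorization of the Fourier transform is immediately reversible by transposition thanks to orthonormality; the whole content of the theorem is packaging Theorem \ref{Main theorem} together with these two observations.
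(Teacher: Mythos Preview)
Your proof is correct and follows essentially the same approach as the paper: forward Fourier transform, zero out the Gelfand--Tsetlin coordinates whose terminal shape has $a\notin H$, then inverse Fourier transform, for a total of $2(n-1)\binom{n}{k}+2(n-1)\binom{n}{k}=4(n-1)\binom{n}{k}$ operations. Your added remark that orthonormality gives $[B_i]_{B_{i-1}}=[B_{i-1}]_{B_i}^{T}$ and that the block-diagonal structure of Theorem~\ref{Sparsity of matrix} is preserved under transposition makes explicit a point the paper leaves implicit.
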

\begin{proof}
First we apply the Fourier transform to the function $f$, so that we obtain the column vector $[f]_{B_n}$ using $2( n-1) \binom{n}{k}$ operations. The basis $B_n$ is parametrized by all Young tableaux of shape $(n-a,a)$ for $a=0,...,s$. Then we substitute by $0$ the values of the entries of the vector  $[f]_{B_n}$ that correspond to Young tableaux of shape $(n-a,a)$ with $a$ not in $H$. The resulting column vector is $[f_H]_{B_n}$. Finally we apply the inverse Fourier transform to $[f_H]_{B_n}$ so that we obtain $[f_H]_{B_0}$ using $2(n-1) \binom{n}{k}$ more operations.
\end{proof}

\begin{thm}
\label{Weights of the decomposition}
Assume that the matrices $[B_{i-1}]_{B_{i}}$ for $i=2,3,...,n$ have been computed. Given a column vector  $[f]_{B_0}$ with $f \in \mathcal{F}$, all the weights $\|f_a\|^2$, for $a=0,...,s$,   can be computed using at most $(2n-1) \binom{n}{k}$ operations.
\end{thm}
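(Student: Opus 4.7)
The plan is to reduce the problem to applying the fast Fourier transform once and then performing a single linear pass over the output. By the orthonormality of the Gelfand-Tsetlin basis $B_n$ and the fact that $B_n$ is adapted to the isotypic decomposition $\mathcal{F}=\bigoplus_{a=0}^{s}\mathcal{F}_a$, the weight $\|f_a\|^2$ will be obtainable as a sum of squared magnitudes of coordinates of $[f]_{B_n}$.

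First I would invoke Theorem \ref{Main theorem} to compute the column vector $[f]_{B_n}=[B_0]_{B_n}[f]_{B_0}$ using at most $2(n-1)\binom{n}{k}$ operations. Next I would observe that, since $\mathcal{F}_a=V_{(n-a,a)}$, the subset
$$
B_n^{(a)} = \{\,v_T \in B_n : T \text{ is a standard Young tableau of shape } (n-a,a)\,\}
$$
is an orthonormal basis of $\mathcal{F}_a$ (this is exactly the content of the construction of the Gelfand-Tsetlin basis for the multiplicity-free representation $\mathcal{F}$ in Section \ref{Gelfand-Tsetlin basis of F}). Hence the orthogonal projection of $f$ onto $\mathcal{F}_a$ is
$$
f_a \;=\; \sum_{v_T \in B_n^{(a)}} \langle f, v_T\rangle \, v_T,
$$
and by orthonormality
$$
\|f_a\|^2 \;=\; \sum_{v_T \in B_n^{(a)}} |\langle f, v_T\rangle|^2.
$$
The inner products $\langle f,v_T\rangle$ are precisely the entries of the column vector $[f]_{B_n}$, so each $\|f_a\|^2$ is the sum of the squared magnitudes of the coordinates of $[f]_{B_n}$ whose labels are tableaux of shape $(n-a,a)$.

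It then suffices to sweep once through the $\binom{n}{k}$ entries of $[f]_{B_n}$: for each entry $c$, compute $c\bar c$ and add the result to the accumulator corresponding to its shape bin. In the paper's cost model one complex multiplication together with one complex addition counts as a single operation, so this sweep uses exactly $\binom{n}{k}$ operations. Adding this to the $2(n-1)\binom{n}{k}$ operations from the Fourier transform yields the claimed bound of $(2n-1)\binom{n}{k}$ operations. There is no real obstacle here; the only subtlety is making sure to charge the squared-magnitude step as one operation per coordinate (a multiplication plus one accumulation addition) rather than two, which is exactly what the paper's convention allows.
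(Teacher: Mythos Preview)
Your proof is correct and follows essentially the same approach as the paper: apply the fast Fourier transform to obtain $[f]_{B_n}$ in $2(n-1)\binom{n}{k}$ operations, then for each coordinate compute its squared modulus and accumulate it into the bin indexed by the shape of its labeling tableau, for a further $\binom{n}{k}$ operations. Your write-up is in fact slightly more careful than the paper's, since you explicitly justify why $\|f_a\|^2$ equals the sum of squared magnitudes of the relevant coordinates (via orthonormality of $B_n$ and its adaptation to the isotypic decomposition) and why the final sweep costs exactly one operation per entry under the stated cost convention.
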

\begin{proof}
Observe that $\|f_a\|^2 = \|[f_a]_{B_n}\|^2 $. To obtain the  column vector $[f_a]_{B_n}$, we apply the Fourier transform to the function $f$, so that we obtain the column vector $[f]_{B_n}$ using $2( n-1) \binom{n}{k}$ operations. Then we select the entries of the vector  $[f]_{B_n}$ that correspond to Young tableaux of shape $(n-a,a)$, and we compute the sum of the squares of these entries. Doing this for all the values of $a$ can be accomplished using at most $\binom{n}{k}$ operations.
\end{proof}

\end{document}